\newcommand{\Z}{\Bbb Z}
\newcommand{\N}{\Bbb N}
\def\NL{\hfill\break}
\newcommand{\set}[1]{\left\{#1\right\}}
\newtheorem{theorem}{Theorem}[section]
\newtheorem{definition}[theorem]{Definition}
\newtheorem{proposition}[theorem]{Proposition}
\newtheorem{corollary}[theorem]{Corollary}
\newtheorem{lemma}[theorem]{Lemma}
\newtheorem{remark}[theorem]{Remark}
\newtheorem{example}[theorem]{Example}
\begin{document}
\title[On $(\sigma,\delta)$-skew McCoy modules]{On $(\sigma,\delta)$-skew McCoy modules}
\date{\today}            

\subjclass[2010]{16S36, 16U80}
\keywords{McCoy module, $(\sigma,\delta)$-skew McCoy module, semicommutative module, Armendariz module, $(\sigma,\delta)$-skew Armendariz module, reduced module}

\author[M. Louzari]{Mohamed Louzari}
\address{Depertment of Mathematics\\
  Faculty of sciences \\
  Abdelmalek Essaadi University\\
  BP. 2121 Tetouan, Morocco}
\email{mlouzari@yahoo.com}

\author[L. Ben Yakoub]{L'moufadal Ben Yakoub}
\address{Depertment of Mathematics\\
  Faculty of sciences \\
  Abdelmalek Essaadi University\\
  BP. 2121 Tetouan, Morocco}
\email{benyakoub@hotmail.com}

\begin{abstract}Let $(\sigma,\delta)$ be a quasi derivation of a ring $R$ and $M_R$ a right $R$-module. In this paper, we introduce the notion of $(\sigma,\delta)$-skew McCoy modules which extends the notion of McCoy modules and $\sigma$-skew McCoy modules. This concept can be regarded also as a generalization of $(\sigma,\delta)$-skew Armendariz modules. Some properties of this concept are established and some connections between $(\sigma,\delta)$-skew McCoyness and $(\sigma,\delta)$-compatible reduced modules are examined. Also, we study the property $(\sigma,\delta)$-skew McCoy of some skew triangular matrix extensions $V_n(M,\sigma)$, for any nonnegative integer $n\geq 2$. As a consequence, we obtain: (1) $M_R$ is $(\sigma,\delta)$-skew McCoy if and only if $M[x]/M[x](x^n)$ is $(\overline{\sigma},\overline{\delta})$-skew McCoy, and (2) $M_R$ is $\sigma$-skew McCoy if and only if $M[x;\sigma]/M[x;\sigma](x^n)$ is $\overline{\sigma}$-skew McCoy.

\end{abstract}

\maketitle

\section{Introduction}  Throughout this paper, $R$ denotes an associative ring with unity and $M_R$ a right $R$-module. For a subset $X$ of a module $M_R$, $r_R(X)=\{a\in R|Xa=0\}$ and $\ell_R(X)=\{a\in R|aX=0\}$ will stand for the right and the left annihilator of $X$ in $R$ respectively. An Ore extension of a ring $R$ is denoted by $R[x;\sigma,\delta]$, where $\sigma$ is an endomorphism of $R$ and $\delta$ is a $\sigma$-derivation, i.e., $\delta\colon R\rightarrow R$ is an additive map such that $\delta(ab)=\sigma(a)\delta(b)+\delta(a)b$ for all $a,b\in R$ (the pair $(\sigma,\delta)$ is also called a quasi-derivation of $R$). Recall that elements of $R[x;\sigma,\delta]$ are polynomials in $x$ with coefficients written on the left. Multiplication in $R[x;\sigma,\delta]$ is given by the multiplication in $R$ and the condition $xa=\sigma(a)x+\delta(a)$, for all $a\in R$. In the next, $S$ will stand for the Ore extension $R[x;\sigma,\delta]$. On the other hand, we have a natural functor $-\otimes_RS$ from the category of right $R$-modules into the category of right $S$-modules. For a right $R$-module $M$, the right $S$-module $M\otimes_R S$ is called {\it the induced module} \cite{matczuk/induced}. Since $R[x;\sigma,\delta]$ is a free left $R$-module, elements of $M\otimes_R S$ can be seen as polynomials in $x$ with coefficients in $M$ with natural addition and right $S$-module multiplication.

\par For any $0\leq i\leq j\;(i,j\in \N)$, $f_i^j\in End(R,+)$ will denote the map which is the sum of all possible words in $\sigma,\delta$ built with $i$ factors of $\sigma$ and $j-i$ factors of $\delta$ (e.g., $f_n^n=\sigma^n$ and $f_0^n=\delta^n, n\in \N $). We have $x^ja=\sum_{i=0}^jf_i^j(a)x^i$ for all $a\in R$, where $i,j$ are nonnegative integers with $j\geq i$ (see \cite[Lemma 4.1]{lam}).

\par Following Lee and Zhou \cite{lee/zhou}, we introduce the notation $M[x;\sigma,\delta]$ to write the $S$-module $M\otimes_R S$. Consider $$M[x;\sigma,\delta]:=\set{\sum_{i=0}^nm_ix^i\mid n\geq 0,m_i\in M};$$ which is an $S$-module under an obvious addition and the action of monomials of $R[x;\sigma,\delta]$ on monomials in $M[x;\sigma,\delta]_{R[x;\sigma,\delta]}$ via $(mx^j)(ax^{\ell})=m\sum_{i=0}^jf_i^j(a)x^{i+\ell}$ for all $a\in R$ and $j,\ell\in \N$. The $S$-module $M[x;\sigma,\delta]$ is called the {\it skew polynomial extension} related to the quasi-derivation $(\sigma,\delta)$.

\par A module $M_R$ is semicommutative, if for any $m\in M$ and $a\in R$, $ma=0$ implies $mRa=0$ \cite{rege2002}. Let $\sigma$ an endomorphism of $R$, $M_R$ is called an $\sigma$-semicommutative module \cite{zhang/chen} if, for any $m\in M$ and $a\in R$, $ma=0$ implies $mR\sigma(a)=0$. For a module $M_R$ and a quasi-derivation $(\sigma,\delta)$ of $R$, we say that $M_R$ is $\sigma$-compatible, if for each $m\in M$ and $a\in R$, we have $ma=0 \Leftrightarrow m\sigma(a)=0$. Moreover, we say that $M_R$ is $\delta$-compatible, if for each $m\in M$ and $a\in R$, we have $ma=0\Rightarrow m\delta(a)$=0. If $M_R$ is both $\sigma$-compatible and $\delta$-compatible, we say that $M_R$ is $(\sigma,\delta)$-compatible (see \cite{annin/2004}). In \cite{zhang/chen}, a module $M_R$ is called $\sigma$-{\it skew Armendariz}, if $m(x)f(x)=0$ where $m(x)=\sum_{i=0}^nm_ix^i\in M[x;\sigma]$ and $f(x)=\sum_{j=0}^ma_jx^j\in R[x;\sigma]$ implies $m_i\sigma^i(a_j)=0$ for all $i,j$. According to Lee and Zhou \cite{lee/zhou}, $M_R$ is called $\sigma$-{\it Armendariz}, if it is $\sigma$-compatible and $\sigma$-skew Armendariz.

\par Following Alhevas and Moussavi \cite{moussavi/2012}, a module $M_R$ is called $(\sigma,\delta)$-skew Armendariz, if whenever $m(x)g(x)=0$ where $m(x)=\sum_{i=0}^pm_ix^i\in M[x;\sigma,\delta]$ and $g(x)=\sum_{j=0}^qb_jx^j\in R[x;\sigma,\delta]$, we have $m_ix^ib_jx^j=0$ for all $i,j$.


In this paper, we introduce the concept of $(\sigma,\delta)$-skew McCoy modules which is a generalization of McCoy modules and $\sigma$-skew McCoy modules. This concept can be regarded also as a generalization of $(\sigma,\delta)$-skew Armendariz modules and rings. We study connections between reduced modules, $(\sigma,\delta)$-compatible modules and $(\sigma,\delta)$-skew McCoy modules. Also, we show that $(\sigma,\delta)$-skew McCoyness passes from a module $M_R$ to its skew triangular matrix extension $V_n(M,\sigma)$. In this sens, we complete the definition of skew triangular matrix rings $V_n(R,\sigma)$ given by Isfahani \cite{isfahani/2011}, by introducing the notion of skew triangular matrix modules. Moreover, we give some results on $(\sigma,\delta)$-skew McCoyness for skew triangular matrix modules.

\section{$(\sigma,\delta)$-skew McCoy modules}%

\par Cui and Chen \cite{cui/2011,cui/2012}, introduced both concepts of McCoy modules and $\sigma$-skew McCoy modules. A module $M_R$ is called {\it McCoy} if $m(x)g(x)=0$, where $m(x)=\sum_{i=0}^pm_ix^i\in M[x]$ and $g(x)=\sum_{j=0}^qb_jx^j\in R[x]\setminus\{0\}$ implies that there exists $a\in R\setminus\{0\}$ such that $m(x)a=0$. A module $M_R$ is called {\it $\sigma$-skew McCoy} if $m(x)g(x)=0$, where $m(x)=\sum_{i=0}^pm_ix^i\in M[x;\sigma]$ and $g(x)=\sum_{j=0}^qb_jx^j\in R[x;\sigma]\setminus\{0\}$ implies that there exists $a\in R\setminus\{0\}$ such that $m(x)a=0$. With the same manner, we introduce the concept of {\it $(\sigma,\delta)$-skew McCoy} modules which is a generalization of McCoy modules, $\sigma$-skew McCoy modules and $(\sigma,\delta)$-skew Armendariz modules.

\begin{definition}Let $M_R$ be a module and $M[x;\sigma,\delta]$ the corresponding $(\sigma,\delta)$-skew polynomial module over $R[x;\sigma,\delta]$.
\par$\mathbf{(1)}$ The module $M_R$ is called {\it $(\sigma,\delta)$-skew McCoy} if $m(x)g(x)=0$, where $m(x)=\sum_{i=0}^pm_ix^i\in M[x;\sigma,\delta]$ and $g(x)=\sum_{j=0}^qb_jx^j\in R[x;\sigma,\delta]\setminus\{0\}$, implies that there exists $a\in R\setminus\{0\}$ such that $m(x)a=0$ $($i.e., $\sum_{i=\ell}^pm_if_{\ell}^i(a)=0$, for all $\ell=0,1,\cdots,p)$.
\par$\mathbf{(2)}$ The ring $R$ is called {\it $(\sigma,\delta)$-skew McCoy} if $R$ is $(\sigma,\delta)$-skew McCoy as a right $R$-module.
\end{definition}

\begin{remark}\label{rem2}$\mathbf{(1)}$ If $M_R$ is an $(\sigma,\delta)$-skew Armendariz module then it is $(\sigma,\delta)$-skew McCoy $($Proposition \ref{prop1}$)$. But the converse is not true $($Example \ref{exp mcnotarm}$)$.
\par$\mathbf{(2)}$ If $\sigma=id_R$ and $\delta=0$ we get the concept of McCoy module, if only $\delta=0$, we get the concept of $\sigma$-skew McCoy module.
\par$\mathbf{(3)}$ A module $M_R$ is $(\sigma,\delta)$-skew McCoy if and only if for all $m(x)\in M[x;\sigma,\delta]$, $r_{R[x;\sigma,\delta]}(m(x))\neq 0\Rightarrow r_{R[x;\sigma,\delta]}(m(x))\cap R\neq 0.$
\end{remark}

An ideal $I$ of a ring $R$ is called $(\sigma,\delta)$-stable, if $\sigma(I)\subseteq I$ and $\delta(I)\subseteq I$.

\begin{proposition}\label{prop2}$\mathbf{(1)}$ Let $I$ be a nonzero right ideal of $R$. If $I$ is $(\sigma,\delta)$-stable then $R/I$ is an $R$-module $(\sigma,\delta)$-skew McCoy.
\par$\mathbf{(2)}$ For any index set $I$, if $M_i$ is an $(\sigma_i,\delta_i)$-skew McCoy as $R_i$-module for each $i\in I$, then $\prod_{i\in I}M_i$ is an $(\sigma,\delta)$-skew McCoy as $\prod_{i\in I}R_i$-module, where $(\sigma,\delta)=(\sigma_i,\delta_i)_{i\in I}$.
\par$\mathbf{(3)}$ Every submodule of an $(\sigma,\delta)$-skew McCoy module is $(\sigma,\delta)$-skew McCoy. In particular, if $I$ is a right ideal of an $(\sigma,\delta)$-skew McCoy ring then $I_R$ is $(\sigma,\delta)$-skew McCoy module.
\par$\mathbf{(4)}$ A module $M_R$ is $(\sigma,\delta)$-skew McCoy if and only if every finitely generated submodule of $M_R$ is $(\sigma,\delta)$-skew McCoy.
\end{proposition}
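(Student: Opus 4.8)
The plan is to dispatch the four parts in the order (3), (4), (2), (1), since the first three are direct manipulations of the defining condition while (1) needs one extra idea. Throughout I will use the reformulation recorded in the definition, namely that $m(x)a=0$ means $\sum_{i\geq\ell}m_if_\ell^i(a)=0$ for every $\ell$, together with the characterization in Remark \ref{rem2}(3) that $(\sigma,\delta)$-skew McCoyness says exactly $r_{R[x;\sigma,\delta]}(m(x))\neq 0\Rightarrow r_{R[x;\sigma,\delta]}(m(x))\cap R\neq 0$. For (3), let $N\leq M$ with $M$ being $(\sigma,\delta)$-skew McCoy, and suppose $n(x)g(x)=0$ with $n(x)=\sum_{i=0}^p n_ix^i\in N[x;\sigma,\delta]$ and $0\neq g(x)\in R[x;\sigma,\delta]$. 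Reading $n(x)$ as an element of $M[x;\sigma,\delta]$ (legitimate since $N\subseteq M$), McCoyness of $M$ yields $0\neq a\in R$ with $n(x)a=0$; the equalities $\sum_{i\geq\ell}n_if_\ell^i(a)=0$ involve only the coefficients $n_i\in N$, so the very same $a$ witnesses McCoyness for $N$. The ``in particular'' clause is the case $M=R_R$, $N=I$.

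For (4), the forward implication is immediate from (3), since a finitely generated submodule is in particular a submodule. For the converse, take $m(x)g(x)=0$ with $m(x)=\sum_{i=0}^pm_ix^i$ and $0\neq g$, and put $N=\sum_{i=0}^p m_iR$, a finitely generated submodule with $m(x)\in N[x;\sigma,\delta]$. Applying the hypothesis to $N$ delivers the required $0\neq a\in R$ with $m(x)a=0$, so $M$ is $(\sigma,\delta)$-skew McCoy.

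For (2), write $R=\prod_iR_i$, $M=\prod_iM_i$, and observe that $(\sigma,\delta)=(\sigma_i,\delta_i)_{i}$ acts coordinatewise, so each $f_\ell^i$ and the whole $S$-action split over the index set; consequently a product $m(x)g(x)$ vanishes if and only if it vanishes in every coordinate. Given $m(x)g(x)=0$ with $g\neq 0$, there is a coordinate $i_0$ and a degree in which $g$ has a nonzero entry, so that the component $g_{i_0}(x)\in R_{i_0}[x;\sigma_{i_0},\delta_{i_0}]$ is nonzero while $m_{i_0}(x)g_{i_0}(x)=0$. By $(\sigma_{i_0},\delta_{i_0})$-skew McCoyness of $M_{i_0}$ there is $0\neq a_{i_0}\in R_{i_0}$ with $m_{i_0}(x)a_{i_0}=0$. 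Define $a\in R$ to equal $a_{i_0}$ in coordinate $i_0$ and $0$ elsewhere; then $a\neq 0$ and $m(x)a=0$ coordinatewise, the remaining coordinates being killed by the zero entries of $a$.

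Finally, for (1) the idea is to produce a single nonzero element of $R$ annihilating the whole module $R/I$, so that McCoyness holds for trivial reasons. Fix $0\neq c\in I$. For $m(x)=\sum_i\overline{r_i}\,x^i\in (R/I)[x;\sigma,\delta]$ the coefficient of $x^\ell$ in $m(x)c$ is $\overline{\sum_{i\geq\ell}r_if_\ell^i(c)}$; since $I$ is $(\sigma,\delta)$-stable each $f_\ell^i(c)$ lies in $I$, whence each coefficient vanishes in $R/I$ and $m(x)c=0$. Thus whenever $m(x)g(x)=0$ with $g\neq 0$, the element $a=c$ already satisfies $m(x)a=0$, and $R/I$ is $(\sigma,\delta)$-skew McCoy. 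The one point requiring care — and the main obstacle — is exactly the passage $f_\ell^i(c)\in I\Rightarrow r_if_\ell^i(c)\in I$: this is where both the $(\sigma,\delta)$-stability of $I$ and its ideal structure are essential, and it is the step that makes the choice $a=c$ uniform in $m(x)$.
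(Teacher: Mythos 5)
Your write-ups of (3) and (4) are correct and are exactly what the paper leaves as ``obvious'': the annihilating equations $\sum_{i\geq\ell}n_if_\ell^i(a)=0$ only involve the coefficients of the given polynomial, so a witness for $M$ is a witness for any submodule, and the converse of (4) follows by passing to $\sum_{i=0}^pm_iR$. In (2) your argument is actually \emph{more} careful than the paper's: the paper claims that for \emph{every} coordinate $i$ there exists $0\neq r_i\in R_i$ with $m_i(x)r_i=0$, which is unjustified in coordinates where $f_i(x)=0$ (the McCoy hypothesis only applies against a nonzero polynomial; if $m_i(x)=1$ and $f_i(x)=0$ no such nonzero $r_i$ exists). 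Your repair --- invoke McCoyness only in one coordinate $i_0$ with $g_{i_0}(x)\neq 0$ and put $0$ in all other coordinates --- is the correct version of the paper's intended proof.

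In (1), however, you have reproduced (and even highlighted) a step that does not follow from the stated hypotheses, in your argument or in the paper's, which is identical. Killing the coefficient $\overline{\sum_{i\geq\ell}r_if_\ell^i(c)}$ requires $r_if_\ell^i(c)\in I$ knowing only $f_\ell^i(c)\in I$; that is \emph{left} absorption, $RI\subseteq I$, whereas the hypothesis makes $I$ a \emph{right} ideal, giving only $IR\subseteq I$ (the right-ideal hypothesis is what makes $R/I$ a right $R$-module, but it does not help here). Your closing sentence asserts that this passage is covered by ``the ideal structure'' of $I$; for a genuinely one-sided right ideal it is not, and in fact the statement itself then fails. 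Take $R=M_2(k)$ over a field $k$, $\sigma=\mathrm{id}$, $\delta=0$, and $I=e_{11}R$, a nonzero $(\sigma,\delta)$-stable right ideal. In $(R/I)[x]$ one has $\bigl(\overline{e_{21}}+\overline{e_{22}}\,x\bigr)\bigl(e_{21}-e_{11}x\bigr)=0$ since $e_{21}e_{21}=0$, $-e_{21}e_{11}+e_{22}e_{21}=-e_{21}+e_{21}=0$ and $e_{22}e_{11}=0$, while $g(x)=e_{21}-e_{11}x\neq 0$; yet $\overline{e_{21}}\,a=\overline{e_{22}}\,a=\bar 0$ means $e_{21}a,e_{22}a\in I$, which forces $a=0$. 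So $R/I$ is not McCoy, i.e.\ not $(\mathrm{id},0)$-skew McCoy. The conclusion: both your proof and the paper's are really using that $I$ is two-sided (and $(\sigma,\delta)$-stable), which is the reading consistent with the paper's definition of stability ``for an ideal''; under that reading your argument for (1) is complete, but as a proof of the statement with ``right ideal'' taken literally it has a genuine gap --- one inherited from, not introduced by, the paper.
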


\begin{proof}$\mathbf{(1)}$ Let $m(x)=\sum_{i=0}^p\overline{m}_ix^i\in (R/I)[x;\sigma,\delta]$, where $\overline{m}_i=r_i+I\in R/I$ for all $i=0,1,\cdots,p$ and $r$ an arbitrary nonzero element of $I$. We have $m(x)r=\sum_{i=0}^p(r_i+I)\sum_{\ell=0}^if_{\ell}^i(r)x^{\ell}\in I[x;\sigma,\delta]$, because $f_{\ell}^i(r)\in I$ for all $\ell=0,1,\cdots,i$. Hence $m(x)r=\bar {0}$.
\par$\mathbf{(2)}$ Let $M=\prod_{i\in I}M_i$ and $R=\prod_{i\in I}R_i$ such that each $M_i$ is an $(\sigma_i,\delta_i)$-skew McCoy as $R_i$-module for all $i\in I$. Take $m(x)=(m_i(x))_{i\in I}\in M[x;\sigma,\delta]$ and $f(x)=(f_i(x))_{i\in I}\in R[x;\sigma,\delta]\setminus\{0\}$, where $m_i(x)=\sum_{s=0}^pm_i(s)x^s\in M_i[x;\sigma_i,\delta_i]$ and $f_i(x)=\sum_{t=0}^qa_i(t)x^t\in R_i[x;\sigma_i,\delta_i]$ for each $i\in I$. Suppose that $m(x)f(x)=0$, then $m_i(x)f_i(x)=0$ for each $i\in I$. Since $M_i$ is $(\sigma_i,\delta_i)$-skew McCoy, there exists $0\neq r_i\in R_i$ such that $m_i(x)r_i=0$ for each $i\in I$. Thus $m(x)r=0$ where $0\neq r=(r_i)_{i\in I}\in R$.
\par$\mathbf{(3)}$ and $(4)$ are obvious.
\end{proof}

\begin{proposition}\label{prop1}If $M_R$ is an $(\sigma,\delta)$-skew Armendariz module then it is $(\sigma,\delta)$-skew McCoy.
\end{proposition}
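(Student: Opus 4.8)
The plan is to exploit the definition of $(\sigma,\delta)$-skew Armendariz directly and to produce the required nonzero annihilating scalar $a$ as a suitably chosen coefficient of $g(x)$. Suppose $m(x)g(x)=0$ with $m(x)=\sum_{i=0}^p m_ix^i\in M[x;\sigma,\delta]$ and $g(x)=\sum_{j=0}^q b_jx^j\in R[x;\sigma,\delta]\setminus\{0\}$. By the skew Armendariz hypothesis we immediately obtain $m_ix^ib_jx^j=0$ for all $i,j$.

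The key step is to unwind each relation $m_ix^ib_jx^j=0$ into scalar equations. Using the identity $x^ib_j=\sum_{\ell=0}^i f_\ell^i(b_j)x^\ell$ recorded in the introduction, we compute $m_ix^ib_jx^j=\sum_{\ell=0}^i m_i f_\ell^i(b_j)x^{\ell+j}$. Since for fixed $j$ the exponents $\ell+j$ are pairwise distinct as $\ell$ ranges over $0,1,\dots,i$, comparing coefficients in this single product forces $m_i f_\ell^i(b_j)=0$ for all $i,j$ and all $0\le \ell\le i$.

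Now, since $g(x)\neq 0$, there is an index $j_0$ with $b_{j_0}\neq 0$; set $a:=b_{j_0}\in R\setminus\{0\}$. Recalling from the definition that $m(x)a=0$ is equivalent to $\sum_{i=\ell}^p m_i f_\ell^i(a)=0$ for every $\ell=0,1,\dots,p$, it suffices to observe that each summand $m_i f_\ell^i(a)=m_i f_\ell^i(b_{j_0})$ vanishes by the previous step (taking $j=j_0$). Hence $m(x)a=0$ with $a\neq 0$, which is precisely the $(\sigma,\delta)$-skew McCoy condition.

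I expect no serious obstacle here; the only point requiring care is the bookkeeping in the second step, namely correctly expanding $m_ix^ib_jx^j$ through the maps $f_\ell^i$ and verifying that the monomials $x^{\ell+j}$ do not collide, so that vanishing of the product genuinely forces the vanishing of every coefficient $m_i f_\ell^i(b_j)$. Once this is in hand, the choice $a=b_{j_0}$ for any nonzero coefficient of $g(x)$ settles the claim, and in fact any nonzero coefficient of $g(x)$ works equally well.
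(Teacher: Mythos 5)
Your proof is correct and follows essentially the same route as the paper's: use the $(\sigma,\delta)$-skew Armendariz hypothesis to get $m_ix^ib_jx^j=0$, expand via the maps $f_\ell^i$, pick a nonzero coefficient $b_{j_0}$ of $g(x)$, and conclude $m(x)b_{j_0}=0$. The only (harmless) difference is that you extract the coefficient equations from each product $m_ix^ib_{j_0}x^{j_0}=0$ separately, obtaining the termwise vanishing $m_if_\ell^i(b_{j_0})=0$, whereas the paper sums over $i$ first and only concludes that the sums $\sum_{i=\ell}^p m_if_\ell^i(b_{j_0})$ vanish; your version is marginally stronger but the argument is the same.
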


\begin{proof} Let $m(x)=\sum_{i=0}^pm_ix^i\in M[x;\sigma,\delta]$ and $g(x)=\sum_{j=0}^qb_jx^j\in R[x;\sigma,\delta]\setminus\{0\}$. Suppose that $m(x)g(x)=0$, then $m_ix^ib_jx^j=0$ for all $i,j$. Since $g(x)\neq 0$ then $b_{j_0}\neq 0$ for some $j_0\in\{0,1,\cdots,p\}$. Thus $m_ix^ib_{j_0}x^{j_0}=0$ for all $i$. On the other hand $m_ix^ib_{j_0}x^{j_0}=\sum_{\ell=0}^p(\sum_{i=\ell}^pm_if_{\ell}^i(b_{j_0}))x^{\ell+j_0}=0$, and so $\sum_{i=\ell}^pm_if_{\ell}^i(b_{j_0})=0$ for all $\ell=0,1,\cdots,p$. Thus $m(x)b_{j_0}=0$, therefore $M_R$ is $(\sigma,\delta)$-skew McCoy.
\end{proof}

By the next example, we see that the converse of Proposition \ref{prop1} does not hold.

\begin{example}\label{exp mcnotarm}Let $R$ be a reduced ring. Consider the ring
$$R_4=\set{\left(%
\begin{array}{ccccc}
  a & a_{12} & a_{13}& a_{14} \\
  0 & a & a_{23}& a_{24}  \\
  0 & 0 & a & a_{34}\\
  0 & 0 &0& a  \\
\end{array}%
\right)\mid a,a_{ij}\in R},$$
Since $R$ is reduced then it is right McCoy and so $R_4$ is right McCoy, by \cite[Proposition 2.1]{zhao/liu}. But $R_4$ is not Armendariz by \cite[Example 3]{kim/lee}.
\end{example}

A module $(\sigma,\delta)$-skew McCoy need not to be McCoy by \cite[Example 2.3(2)]{cui/2012}. Also, the following example shows that, there exists a module which is McCoy but not $(\sigma,\delta)$-skew McCoy.

\begin{example}\label{exp2}Let $\Z_2$ be the ring of integers modulo $2$, and consider the ring $R=\Z_2\oplus \Z_2$ with the usual addition and multiplication. Let $\sigma$ be an endomorphism of $R$ defined by $\sigma((a,b))=(b,a)$ and $\delta$ an $\sigma$-derivation of $R$ defined by $\delta((a,b))=(a,b)-\sigma((a,b))$. The ring $R$ is commutative reduced then it is McCoy. However, for $p(x)=(1,0)x$ and $q(x)=(1,1)+(1,0)x\in R[x;\sigma,\delta]$. We have $p(x)q(x)=0$, but $p(x)(a,b)\neq 0$ for any $0\neq (a,b)\in R$. Therefore, $R$ is not $(\sigma,\delta)$-skew McCoy. Also, $R$ is not $(\sigma,\delta)$-compatible, because $(0,1)(1,0)=(0,0)$, but $(0,1)\sigma((1,0))=(0,1)^2\neq (0,0)$ and $(0,1)\delta((1,0))=(0,1)(1,1)=(0,1)\neq (0,0)$.
\end{example}

\begin{lemma}\label{rem3} Let $M_R$ be an $(\sigma,\delta)$-compatible module. For any $m\in M_R$, $a\in R$ and nonnegative integers $i,j$. We have the following:
\par$\mathbf{(1)}$ $ma=0\Rightarrow m\sigma^i(a)=m\delta^j(a)=0$.
\par$\mathbf{(2)}$ $ma=0\Rightarrow m\sigma^i(\delta^j(a))=m\delta^i(\sigma^j(a))=0$.
\end{lemma}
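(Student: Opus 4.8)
The plan is to prove part \textbf{(1)} by a direct induction on the exponents, using the two defining implications of $(\sigma,\delta)$-compatibility separately, and then to obtain part \textbf{(2)} by simply composing the two halves of part \textbf{(1)}. No commutation relation between $\sigma$ and $\delta$ is needed, so the arguments decouple cleanly.

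First I would establish the implication $ma=0\Rightarrow m\sigma^i(a)=0$ by induction on $i$. The base case $i=0$ is just the hypothesis $ma=0$. For the inductive step, assume $m\sigma^i(a)=0$; since $M_R$ is $\sigma$-compatible, the equivalence $mb=0\Leftrightarrow m\sigma(b)=0$ applied with $b=\sigma^i(a)$ gives $m\sigma^{i+1}(a)=0$. The companion implication $ma=0\Rightarrow m\delta^j(a)=0$ is proved in exactly the same way by induction on $j$, this time invoking $\delta$-compatibility: if $m\delta^j(a)=0$, then the implication $mb=0\Rightarrow m\delta(b)=0$ with $b=\delta^j(a)$ yields $m\delta^{j+1}(a)=0$. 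This settles \textbf{(1)}.

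For part \textbf{(2)}, I would chain the two conclusions of \textbf{(1)}. Starting from $ma=0$, the $\delta$-half of \textbf{(1)} gives $m\delta^j(a)=0$; setting $b=\delta^j(a)$, the $\sigma$-half of \textbf{(1)} applies to the relation $mb=0$ and produces $m\sigma^i(\delta^j(a))=m\sigma^i(b)=0$. The remaining identity $m\delta^i(\sigma^j(a))=0$ follows by reversing the order of the two steps: first apply the $\sigma$-half to get $m\sigma^j(a)=0$, then apply the $\delta$-half with $b=\sigma^j(a)$. The only conceptual point worth flagging — and it is hardly an obstacle — is that compatibility is precisely what makes the inductions go through: each application reproduces an annihilation relation of the same shape $m(\cdot)=0$, so one may iterate and interleave the $\sigma$- and $\delta$-steps freely.
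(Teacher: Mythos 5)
Your proof is correct: the two inductions via $\sigma$- and $\delta$-compatibility, followed by composing them for part \textbf{(2)}, are exactly the ``straightforward verification'' the paper leaves to the reader. There is nothing to add; your write-up simply supplies the details the paper omits.
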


\begin{proof}The verification is straightforward.
\end{proof}

If $M_R$ is an $(\sigma,\delta)$-compatible module then  $ma=0\Rightarrow mf_i^j(a)=0$ for any nonnegative integers $i,j$ such that $i\geq j$, where $m\in M_R$ and $a\in R$. For a subset $U$ of $M_R$ and $(\sigma,\delta)$ a quasi-derivation of $R$, the set of all skew polynomials with coefficients in $U$ is denoted by $U[x;\sigma,\delta]$.

\begin{lemma}\label{prop3}Let $M_R$ be a module and $(\sigma,\delta)$ a quasi-derivation of $R$. The following are equivalent:
\par$\mathbf{(1)}$ For any $U\subseteq M[x;\sigma,\delta]$, $(r_{R[x;\sigma,\delta]}(U)\cap R)[x;\sigma,\delta]=r_{R[x;\sigma,\delta]}(U)$.
\par$\mathbf{(2)}$ For any $m(x)=\sum_{i=0}^pm_ix^i\in M[x;\sigma,\delta]$ and $f(x)=\sum_{j=0}^qa_jx^j\in R[x;\sigma,\delta]$. If $m(x)f(x)=0$ implies $\sum_{\ell=i}^pm_{\ell}f_i^{\ell}(a_j)=0$ for all $i,j$.
\end{lemma}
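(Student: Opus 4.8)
The plan is to reduce both statements to a single computation describing when a ring element right-annihilates a skew polynomial $m(x)$, and then to observe that condition (2) is precisely the ``single element'' case of condition (1).

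First I would record the governing identity. For $m(x)=\sum_{i=0}^p m_i x^i\in M[x;\sigma,\delta]$ and $a\in R$, the module action gives
$$m(x)a=\sum_{\ell=0}^p m_\ell x^\ell a=\sum_{\ell=0}^p m_\ell\sum_{i=0}^\ell f_i^\ell(a)x^i=\sum_{i=0}^p\parth{\sum_{\ell=i}^p m_\ell f_i^\ell(a)}x^i.$$
Hence, for each coefficient $a_j$ of $f(x)$, the equality $m(x)a_j=0$ holds if and only if $\sum_{\ell=i}^p m_\ell f_i^\ell(a_j)=0$ for all $i$. Consequently, the coefficient relation appearing in (2) is nothing but the assertion that every coefficient $a_j$ of $f(x)$ right-annihilates $m(x)$.

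Next I would dispose of the inclusion $\supseteq$ in (1), which holds unconditionally. If $a\in r_{R[x;\sigma,\delta]}(U)\cap R$, then for every $m(x)\in U$ and every $k$ we have $m(x)(ax^k)=(m(x)a)x^k=0$ by module associativity; summing such monomials shows that every element of $(r_{R[x;\sigma,\delta]}(U)\cap R)[x;\sigma,\delta]$ annihilates $U$. Thus the real content of (1) is the reverse inclusion $r_{R[x;\sigma,\delta]}(U)\subseteq (r_{R[x;\sigma,\delta]}(U)\cap R)[x;\sigma,\delta]$, i.e. that every coefficient of a polynomial annihilator of $U$ is itself a constant annihilator of $U$.

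For (2) $\Rightarrow$ (1), I would take any $g(x)=\sum_j a_j x^j\in r_{R[x;\sigma,\delta]}(U)$ and apply (2) to each pair $(m(x),g(x))$ with $m(x)\in U$: since $m(x)g(x)=0$, condition (2) yields $\sum_{\ell=i}^p m_\ell f_i^\ell(a_j)=0$ for all $i,j$, hence $m(x)a_j=0$ for every $j$ by the identity above. As $m(x)\in U$ was arbitrary, each $a_j\in r_{R[x;\sigma,\delta]}(U)\cap R$, so $g(x)\in(r_{R[x;\sigma,\delta]}(U)\cap R)[x;\sigma,\delta]$. For the converse (1) $\Rightarrow$ (2), I would specialize (1) to the singleton $U=\set{m(x)}$: if $m(x)f(x)=0$ then $f(x)\in r_{R[x;\sigma,\delta]}(m(x))=(r_{R[x;\sigma,\delta]}(m(x))\cap R)[x;\sigma,\delta]$, so every coefficient $a_j$ lies in $r_{R[x;\sigma,\delta]}(m(x))\cap R$, i.e. $m(x)a_j=0$, which is exactly the desired coefficient relation. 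I do not expect a genuine obstacle here; the only point requiring care is the bookkeeping in the multiplication identity and the verification that the monomial action $m(x)(ax^k)=(m(x)a)x^k$ behaves as claimed, which is what secures the free inclusion in (1).
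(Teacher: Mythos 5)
Your proof is correct and follows essentially the same route as the paper's: the direction $(1)\Rightarrow(2)$ specializes $(1)$ to the singleton $U=\set{m(x)}$ and translates $m(x)a_j=0$ into the coefficient relations, while $(2)\Rightarrow(1)$ combines the automatic inclusion $(r_{R[x;\sigma,\delta]}(U)\cap R)[x;\sigma,\delta]\subseteq r_{R[x;\sigma,\delta]}(U)$ with a coefficient-wise application of $(2)$ to each $m(x)\in U$. The only difference is expository: you make explicit the identity $m(x)a=\sum_{i=0}^p\parth{\sum_{\ell=i}^p m_\ell f_i^\ell(a)}x^i$ and the verification $m(x)(ax^k)=(m(x)a)x^k$, which the paper leaves implicit.
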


\begin{proof}$(1)\Rightarrow (2)$. Let $m(x)=\sum_{i=0}^pm_ix^i\in M[x;\sigma,\delta]$ and $f(x)=\sum_{j=0}^qa_jx^j\in R[x;\sigma,\delta]$. If $m(x)f(x)=0$, we have $f(x)\in r_{R[x;\sigma,\delta]}(m(x))=(r_{R[x;\sigma,\delta]}(m(x))\cap R)[x;\sigma,\delta]$. Then $a_j\in r_{R[x;\sigma,\delta]}(m(x))$ for all $j$, so that $m(x)a_j=0$ for all $j$. But $m(x)a_j=0 \Leftrightarrow \sum_{\ell=i}^pm_{\ell}f_i^{\ell}(a_j)=0$ for all $0\leq i\leq p$. Thus $\sum_{\ell=i}^pm_{\ell}f_i^{\ell}(a_j)=0$ for all $i,j$.
\NL $(2)\Rightarrow (1)$. Let $U\subseteq M[x;\sigma,\delta]$, we have always $(r_{R[x;\sigma,\delta]}(U)\cap R)[x;\sigma,\delta]\subseteq r_{R[x;\sigma,\delta]}(U)$. Conversely, let $f(x)\in r_{R[x;\sigma,\delta]}(U)$ then by $(2)$, we have $Ua_j=0$ for all $j$ and so $a_j\in r_{R[x;\sigma,\delta]}(U)\cap R$. Therefore  $f(x)\in (r_{R[x;\sigma,\delta]}(U)\cap R)[x;\sigma,\delta]$.
\end{proof}

\begin{theorem}[McCoy's Theorem for module extensions]\label{theo mccoy}Let $M_R$ be a module and $N$ a nonzero submodule of $M[x;\sigma,\delta]$. If one of the equivalent conditions of Lemma \ref{prop3} is satisfied. Then $r_{R[x;\sigma,\delta]}(N)\neq 0$ implies $r_{R}(N)\neq 0$.
\end{theorem}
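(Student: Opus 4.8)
The plan is to apply condition (1) of Lemma \ref{prop3} directly to the set $U=N$. First I would observe that, since $r_{R[x;\sigma,\delta]}(N)\neq 0$, we may choose a nonzero skew polynomial $f(x)=\sum_{j=0}^q a_jx^j\in r_{R[x;\sigma,\delta]}(N)$, so that $Nf(x)=0$. Invoking Lemma \ref{prop3}(1) with $U=N$ yields the identity $r_{R[x;\sigma,\delta]}(N)=(r_{R[x;\sigma,\delta]}(N)\cap R)[x;\sigma,\delta]$; hence $f(x)$ lies in $(r_{R[x;\sigma,\delta]}(N)\cap R)[x;\sigma,\delta]$, which forces every coefficient $a_j$ to belong to $r_{R[x;\sigma,\delta]}(N)\cap R$.

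Next, because $f(x)\neq 0$, at least one coefficient, say $a_{j_0}$, is nonzero, and by the previous step $a_{j_0}\in r_{R[x;\sigma,\delta]}(N)\cap R$. Since $a_{j_0}\in R$ annihilates $N$ under the $R[x;\sigma,\delta]$-action and this action restricts on $R$ to the original $R$-action, we conclude $Na_{j_0}=0$, i.e.\ $a_{j_0}\in r_R(N)$. Thus $r_R(N)$ contains the nonzero element $a_{j_0}$, so $r_R(N)\neq 0$, as required.

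The argument is short once Lemma \ref{prop3} is in hand, so the only point requiring care --- and the main, if mild, obstacle --- is the identification $r_{R[x;\sigma,\delta]}(N)\cap R=r_R(N)$, i.e.\ that an element of $R$ killing $N$ inside the Ore extension is exactly one killing $N$ as an $R$-module; this is immediate from the compatibility of the $R[x;\sigma,\delta]$-module structure on $M[x;\sigma,\delta]$ with the underlying $R$-module structure. Alternatively, one could run the proof through condition (2) of Lemma \ref{prop3}: from $m(x)f(x)=0$ for every $m(x)\in N$ one extracts $\sum_{\ell=i}^{p}m_\ell f_i^{\ell}(a_j)=0$, equivalently $m(x)a_j=0$, for all $i,j$; letting $m(x)$ range over $N$ gives $Na_j=0$ for each $j$, and choosing a nonzero $a_{j_0}$ again produces a nonzero element of $r_R(N)$. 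I would present the condition (1) route as primary, since it renders the annihilator statement most transparent.
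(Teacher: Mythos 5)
Your proof is correct and follows essentially the same route as the paper: apply Lemma \ref{prop3}(1) with $U=N$ to get $r_{R[x;\sigma,\delta]}(N)=(r_{R[x;\sigma,\delta]}(N)\cap R)[x;\sigma,\delta]$, deduce that every coefficient of a nonzero $f(x)\in r_{R[x;\sigma,\delta]}(N)$ lies in $r_R(N)$, and extract a nonzero coefficient. Your added remark that $r_{R[x;\sigma,\delta]}(N)\cap R=r_R(N)$ (since the $R$-action on $N$ is the restriction of the $R[x;\sigma,\delta]$-action) is a point the paper uses tacitly, and it is correct.
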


\begin{proof}Suppose that $r_{R[x;\sigma,\delta]}(N)\neq 0$, then there exists $0\neq f(x)=\sum_{i=0}^pa_ix^i\in r_{R[x;\sigma,\delta]}(N)$. But $r_{R[x;\sigma,\delta]}(N)=(r_{R[x;\sigma,\delta]}(N)\cap R)[x;\sigma,\delta]$ by Lemma \ref{prop3}. Therefore all $a_i$ are in $r_{R[x;\sigma,\delta]}(N)$, so $a_i\in r_R(N)$ for all $i$. Since $f(x)\neq 0$ then there exists $i_0\in \{0,1,\cdots p\}$ such that $0\neq a_{i_0}\in r_R(N)$. So that $r_{R}(N)\neq 0$.
\end{proof}

\begin{definition}\label{def}Let $M_R$ be a module and $\sigma$ an endomorphism of $R$. We say that $M_R$
satisfies the condition $(\mathcal{C_{\sigma}})$ if whenever $m\sigma(a)=0$ with $m\in M$ and $a\in R$, then $ma=0$.
\end{definition}

\begin{proposition}\label{prop/combine}Let $m(x)=\sum_{i=0}^{p}m_ix^i\in M[x;\sigma,\delta]$ and $f(x)=\sum_{j=0}^qa_jx^j$ $\in R[x;\sigma,\delta]$ such that $m(x)f(x)=0$. If one of the following conditions hold:
\par$\mathbf{(a)}$ $M_R$ is $(\sigma,\delta)$-skew Armendariz and satisfy the condition $(\mathcal{C_{\sigma}})$.
\par$\mathbf{(b)}$ $M_R$ is reduced and $(\sigma,\delta)$-compatible.
Then $m_ia_j=0$ for all $i,j$.
\end{proposition}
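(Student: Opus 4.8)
For part (a) the plan is to expand the skew-Armendariz conclusion and then peel off the endomorphism by means of the condition $(\mathcal{C_{\sigma}})$. Since $M_R$ is $(\sigma,\delta)$-skew Armendariz and $m(x)f(x)=0$, the definition gives $m_ix^ia_jx^j=0$ for all $i,j$. Expanding through the module action, $m_ix^ia_jx^j=\sum_{\ell=0}^i m_if_{\ell}^i(a_j)x^{\ell+j}$, and because the exponents $\ell+j$ with $0\le\ell\le i$ are pairwise distinct, every coefficient must vanish; in particular the leading one yields $m_i\sigma^i(a_j)=m_if_i^i(a_j)=0$. Writing $m_i\sigma^i(a_j)=m_i\sigma(\sigma^{i-1}(a_j))$ and applying $(\mathcal{C_{\sigma}})$ once gives $m_i\sigma^{i-1}(a_j)=0$; iterating $i$ times strips off all copies of $\sigma$ and leaves $m_ia_j=0$, as desired.

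For part (b) the first step is to record that, under $(\sigma,\delta)$-compatibility, the two conclusions $m_ia_j=0$ and $m_ix^ia_jx^j=0$ are equivalent: if $m_ia_j=0$ then $m_if_{\ell}^i(a_j)=0$ for every $\ell$ by the observation following Lemma \ref{rem3}, so $m_ix^ia_jx^j=\sum_{\ell}m_if_{\ell}^i(a_j)x^{\ell+j}=0$; conversely the top coefficient of $m_ix^ia_jx^j$ is $m_i\sigma^i(a_j)$, and $\sigma$-compatibility turns $m_i\sigma^i(a_j)=0$ back into $m_ia_j=0$. Hence it suffices to prove the $(\sigma,\delta)$-skew Armendariz property for a reduced $(\sigma,\delta)$-compatible module. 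For this I would imitate the classical proof that reduced rings are Armendariz, organized as a descending induction on the total degree: comparing the top coefficient of $m(x)f(x)=0$ gives $m_p\sigma^p(a_q)=0$, hence $m_pa_q=0$ by compatibility, and then the defining properties of a reduced module (the cancellation law $mc^2=0\Rightarrow mc=0$ together with $ma=0\Leftrightarrow mRa=0$) drive the step-down, multiplying each coefficient equation on the right by a suitable $a_t$ and cancelling squares to obtain successively $m_{p-s}a_q=0$, then $m_pa_{q-s}=0$, and so on.

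The main obstacle is that, in contrast to the $\delta=0$ situation, the $\delta$-part of the quasi-derivation couples the coefficients of $m(x)f(x)$ across several degrees, so the coefficient of a fixed power $x^k$ is a twisted sum $\sum_{j}\sum_{i\ge k-j}m_if_{k-j}^i(a_j)$ rather than a clean convolution $\sum_{i+j=k}m_ia_j$. The way around this is to route the inductive hypothesis through compatibility: once a vanishing $m_ia_j=0$ has been secured for a higher-degree pair, Lemma \ref{rem3} and the remark after it upgrade it to $m_if_{\ell}^i(a_j)=0$ for all admissible $\ell$, so every already-resolved term drops out of the lower coefficient equations, and the leading $\sigma^i$-twist on the surviving term is removed by $\sigma$-compatibility before the reduced cancellation law is invoked. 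Keeping this bookkeeping ordered — neutralizing all twisted contributions before each application of the reduced law — is the only delicate point; once it is arranged, the remaining algebra is routine and yields $m_ia_j=0$ for all $i,j$.
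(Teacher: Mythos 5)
Your proposal is correct and follows essentially the same route as the paper: in (a) you extract the leading coefficient $m_i\sigma^i(a_j)$ from $m_ix^ia_jx^j=0$ and strip the $\sigma$'s by iterating $(\mathcal{C_{\sigma}})$, exactly as the paper does; in (b) your descending induction on total degree --- kill already-resolved terms via semicommutativity and $(\sigma,\delta)$-compatibility after right-multiplying by suitable $a_t$, apply the reduced cancellation law to the surviving twisted term, then remove the twist by compatibility --- is precisely the paper's argument with equations $(0),(1),(1'),(2),(2'),(2'')$. The only cosmetic difference is that you phrase (b) as first proving the skew Armendariz property (which the paper instead deduces as a corollary), but since $m_ia_j=0$ and $m_ix^ia_jx^j=0$ are equivalent under compatibility, the underlying work is identical.
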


\begin{proof}\par$\mathbf{(a)}$ Since $M_R$ is $(\sigma,\delta)$-skew Armendariz then from $m(x)f(x)=0$, we get $m_ix^ia_jx^j=0$ for all $i,j$. But $m_ix^ia_jx^j=m_i\sum_{\ell=0}^if_{\ell}^i(a_j)x^{j+\ell}=m_i\sigma^i(a_j)x^{i+j}+Q(x)=0$
where $Q(x)$ is a polynomial in $M[x;\sigma,\delta]$ of degree strictly less than $i+j$. Thus $m_i\sigma^i(a_j)=0$, therefore $m_ia_j=0$ for all $i,j$.
\par$\mathbf{(b)}$ We will use freely the fact that, if $ma=0$ then $m\sigma^i(a)=m\delta^{j}(a)=mf_i^{j}(a)=0$ for any nonnegative integers $i,j$ with $j\geq i$. From $m(x)f(x)=0$, we have the following system of equations:
$$\;\qquad\qquad\qquad\qquad\qquad m_p\sigma^p(a_q)=0,\leqno(0)$$
$$m_p\sigma^p(a_{q-1})+m_{p-1}\sigma^{p-1}(a_q)+m_pf_{p-1}^p(a_q)=0,\qquad\qquad\leqno(1)$$
$$m_p\sigma^p(a_{q-2})+m_{p-1}\sigma^{p-1}(a_{q-1})+m_pf_{p-1}^p(a_{q-1})+m_{p-2}\sigma^{p-2}(a_q)+m_{p-1}f_{p-2}^{p-1}(a_q)\leqno(2)$$
$$\quad\qquad\qquad\qquad\quad\;\; +m_pf_{p-2}^p(a_q)=0,$$
$$m_p\sigma^p(a_{q-3})+m_{p-1}\sigma^{p-1}(a_{q-2})+
m_pf_{p-1}^p(a_{q-2})+m_{p-2}\sigma^{p-2}(a_{q-1})\leqno(3)$$
$$+m_{p-1}f_{p-2}^{p-1}(a_{q-1})+m_pf_{p-2}^p(a_{q-1})+m_{p-3}
\sigma^{p-3}(a_q)+m_{p-2}f_{p-3}^{p-2}(a_q)$$
$$\;\;\quad +m_{p-1}f_{p-3}^{p-1}(a_q)+m_pf_{p-3}^p(a_q)=0,$$
$$\qquad\qquad\qquad\qquad\vdots$$
$$\qquad\sum_{j+k=\ell}\;\;\sum_{i=0}^p\;
\sum_{k=0}^q(m_i\sum_{j=0}^if_j^i(a_k))=0,\leqno(\ell)$$
$$\qquad\qquad\qquad\qquad\vdots$$
$$\;\qquad\qquad\qquad\qquad\quad\sum_{i=0}^pm_i\delta^i(a_0)=0.\leqno(p+q)$$
From  equation $(0)$, we have $m_pa_q=0$ by $\sigma$-compatibility. Multiplying equation $(1)$ on the right hand by $a_q$, we get
$$m_p\sigma^p(a_{q-1})a_q+m_{p-1}\sigma^{p-1}(a_q)a_q+m_pf_{p-1}^p(a_q)a_q=0,\;\qquad\qquad\qquad\qquad\leqno(1')$$
Since $M_R$ is semicommutative, then $$m_pa_q=0\Rightarrow m_p\sigma^p(a_{q-1})a_q=m_pf_{p-1}^p(a_q)a_q=0.$$ By Lemma \ref{lemma banal}, equation $(1')$ gives $m_{p-1}a_q=0$. Also, by $(\sigma,\delta)$-compatibility, equation $(1)$ implies $m_p\sigma^p(a_{q-1})=0$, because $m_pa_q=m_{p-1}a_q=0$. Thus $m_pa_{q-1}=0$.
\par Summarizing at this point, we have $$m_pa_q=m_{p-1}a_q=m_pa_{q-1}=0\leqno(\alpha)$$
Now, multiplying equation $(2)$ on the right hand by $a_q$, we get
$$m_p\sigma^p(a_{q-2})a_q+m_{p-1}\sigma^{p-1}(a_{q-1})a_q+m_pf_{p-1}^p(a_{q-1})a_q+m_{p-2}\sigma^{p-2}(a_q)a_q\leqno(2')$$
$$+m_{p-1}f_{p-2}^{p-1}(a_q)a_q+m_pf_{p-2}^p(a_q)a_q=0,\;\;$$
With the same manner as above, equation $(2')$ gives $m_{p-2}\sigma^{p-2}(a_q)a_q=0$ and thus $m_{p-2}a_q=0\;(\beta)$. Also, multiplying equation $(2)$ on the right hand by $a_{q-1}$, we get $$m_p\sigma^p(a_{q-2})a_{q-1}+m_{p-1}\sigma^{p-1}(a_{q-1})a_{q-1}+m_pf_{p-1}^p(a_{q-1})a_{q-1}\leqno(2'')$$
$$+m_{p-2}\sigma^{p-2}(a_q)a_{q-1}+m_{p-1}f_{p-2}^{p-1}(a_q)a_{q-1}+m_pf_{p-2}^p(a_q)a_{q-1}=0$$
Equations $(\alpha)$ and $(\beta)$ implies $$\;0=m_p\sigma^p(a_{q-2})a_{q-1}=m_pf_{p-1}^p(a_{q-1})a_{q-1}=m_{p-2}\sigma^{p-2}(a_q)a_{q-1}$$
$$=m_{p-1}f_{p-2}^{p-1}(a_q)a_{q-1}=m_pf_{p-2}^p(a_q)a_{q-1}\qquad\qquad\qquad\qquad\;$$ Hence, equation $(2'')$ gives $m_{p-1}\sigma^{p-1}(a_{q-1})a_{q-1}=0$ and by Lemma \ref{lemma banal}, we get $m_{p-1}a_{q-1}=0\;(\gamma)$. Now, by equations $(\alpha)$,$(\beta)$ and $(\gamma)$, we get $m_{p-1}\sigma^{p-1}(a_{q-1})=m_pf_{p-1}^p(a_{q-1})=m_{p-2}\sigma^{p-2}(a_q)=m_{p-1}f_{p-2}^{p-1}(a_q)=m_pf_{p-2}^p(a_q)=0$. Therefore equation $(2)$ implies $m_p\sigma^p(a_{q-2})=0$, so that $m_pa_{q-2}=0$.
\par Summarizing at this point, we have $m_ia_j=0$ with $i+j\in \{p+q,p+q-1,p+q-2\}$. Continuing this procedure yields $m_ia_j=0$ for all $i,j$.
\end{proof}

\begin{lemma}\label{lemma banal}Let $M_R$ be an $(\sigma,\delta)$-compatible module, if $ma^2=0$ implies $ma=0$ for any $m\in M$ and $a\in R$. Then
\par$\mathbf{(1)}$ $m\sigma(a)a=0$ implies $ma=m\sigma(a)=0$.
\par$\mathbf{(2)}$ $ma\sigma(a)=0$ implies $ma=m\sigma(a)=0$.
\end{lemma}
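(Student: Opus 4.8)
The plan is to reduce each of the two statements to the standing hypothesis ``$mb^2=0\Rightarrow mb=0$'' by exploiting $\sigma$-compatibility to trade a trailing factor $a$ for a trailing factor $\sigma(a)$ (and vice versa). The only tool beyond the hypothesis is the equivalence furnished by $\sigma$-compatibility, namely $nb=0\Leftrightarrow n\sigma(b)=0$ for every $n\in M$ and $b\in R$, together with Lemma \ref{rem3}. So the whole proof is a short chain of rewritings; I do not expect any real obstacle, only the need to track which direction of the $\sigma$-compatibility equivalence is being invoked at each step.

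For part $\mathbf{(1)}$ I would start from $m\sigma(a)a=0$ and apply $\sigma$-compatibility to the pair $n=m\sigma(a)$, $b=a$, replacing the trailing $a$ by $\sigma(a)$. Since $\sigma$ is a ring endomorphism, $\sigma(a)\sigma(a)=\sigma(a)^2$, so this yields $m\sigma(a)^2=0$. The reduced-type hypothesis, read with $\sigma(a)$ in place of $a$, then forces $m\sigma(a)=0$, and a final use of $\sigma$-compatibility (now in the form $m\sigma(a)=0\Rightarrow ma=0$) returns $ma=0$. Both conclusions $ma=m\sigma(a)=0$ are thereby obtained.

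For part $\mathbf{(2)}$ I would run the same idea in the reverse direction. From $ma\sigma(a)=0$, apply $\sigma$-compatibility to the pair $n=ma$, $b=a$ to rewrite the trailing $\sigma(a)$ as $a$, giving $ma\cdot a=ma^2=0$. The hypothesis immediately yields $ma=0$, and one more appeal to $\sigma$-compatibility upgrades this to $m\sigma(a)=0$.

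The only point deserving a moment's attention is the observation that $\sigma(a)\sigma(a)=\sigma(a)^2$, which is exactly what lets the hypothesis apply verbatim in part $\mathbf{(1)}$; apart from this bookkeeping, each implication is a direct instance of $\sigma$-compatibility or of the assumed property, so the verification is indeed straightforward.
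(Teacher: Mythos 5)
Your proof is correct: each step is a legitimate application of the $\sigma$-compatibility equivalence $nb=0\Leftrightarrow n\sigma(b)=0$ (with the right choice of module element $n$) or of the hypothesis $mb^2=0\Rightarrow mb=0$ instantiated at $b=\sigma(a)$, and this chain of rewritings is precisely the routine verification the paper leaves to the reader with the remark that the proof is straightforward. Note that your argument only ever uses $\sigma$-compatibility, never $\delta$-compatibility, so the lemma holds under that weaker hypothesis as well.
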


\begin{proof}The proof is straightforward.
\end{proof}

According to Lee and Zhou \cite{lee/zhou}, a module $M_R$ is called $\sigma$-{\it reduced}, if for any $m\in M$ and $a\in R$. We have

\begin{enumerate}
  \item [$\mathbf{(1)}$] $ma=0$ implies $mR\cap Ma=0$.
  \item [$\mathbf{(2)}$] $ma=0$ if and only if $m\sigma(a)=0$.
\end{enumerate}

The module $M_R$ is called reduced if $M_R$ is $id_R$-reduced.

\begin{lemma}[{\cite[Lemma 1.2]{lee/zhou}}]\label{lemma zhou}The following are equivalent for a module $M_R$:
\begin{enumerate}
  \item [$\mathbf{(1)}$] $M_R$ is $\sigma$-reduced.
  \item [$\mathbf{(2)}$] The following three conditions hold: For any $m\in M$ and $a\in R$,
  \begin{enumerate}
    \item [$\mathbf{(a)}$] $ma=0$ implies $mRa=mR\sigma(a)=0$.
    \item [$\mathbf{(b)}$] $ma\sigma(a)=0$ implies $ma=0$.
    \item [$\mathbf{(c)}$] $ma^2=0$ implies $ma=0$.
  \end{enumerate}
\end{enumerate}
\end{lemma}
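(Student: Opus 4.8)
The plan is to establish the two implications separately. Throughout, write the defining conditions of $\sigma$-reducedness (condition $(1)$) as (R1): $ma=0\Rightarrow mR\cap Ma=0$, and (R2): $ma=0\Leftrightarrow m\sigma(a)=0$.

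First I would prove $(1)\Rightarrow(2)$. For $\mathbf{(a)}$, assume $ma=0$ and fix $r\in R$; since $mra=m(ra)\in mR$ and $mra=(mr)a\in Ma$, condition (R1) forces $mra\in mR\cap Ma=0$, giving $mRa=0$, and then applying (R2) to $mr$ in place of $m$ upgrades $mra=0$ to $mr\sigma(a)=0$, so $mR\sigma(a)=0$. For $\mathbf{(c)}$, I read $ma^2=0$ as $(ma)a=0$ and apply (R1) to $ma$: the element $ma$ lies both in $(ma)R$ and in $Ma$, hence in their zero intersection, so $ma=0$. For $\mathbf{(b)}$, from $ma\sigma(a)=0$ the equivalence (R2) applied to $ma$ yields $ma\cdot a=ma^2=0$, and the already proved $\mathbf{(c)}$ gives $ma=0$.

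Next I would prove $(2)\Rightarrow(1)$. The forward half of (R2) is the special case $r=1$ of $\mathbf{(a)}$; for the converse, starting from $m\sigma(a)=0$ I apply $\mathbf{(a)}$ (with $\sigma(a)$ as the annihilated element) to obtain $mR\sigma(a)=0$, then specialise $r=a$ to get $ma\sigma(a)=0$, and conclude $ma=0$ by $\mathbf{(b)}$. To verify (R1), suppose $ma=0$ and take an arbitrary $x\in mR\cap Ma$, say $x=mr=m'a$. By $\mathbf{(a)}$ we have $mRa=0$, so $xa=mra=0$; but $x=m'a$ forces $xa=m'a^2$, whence $\mathbf{(c)}$ yields $m'a=0$, that is $x=0$. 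Thus $mR\cap Ma=0$.

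I expect the main obstacle to be the converse implication in (R2) together with the verification of (R1): both hinge on the right specialisation---feeding $r=a$ into the uniform annihilation $mR\sigma(a)=0$, and right-multiplying an element of $mR\cap Ma$ by $a$ to trigger $\mathbf{(c)}$---rather than on any lengthy computation.
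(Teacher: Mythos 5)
Your proof is correct. Note that the paper itself gives no argument for this lemma---it is quoted verbatim from Lee and Zhou \cite[Lemma 1.2]{lee/zhou}---so there is nothing to compare against; your write-up serves as a complete, self-contained verification. All the key specialisations check out: in $(1)\Rightarrow(2)$ the decomposition $mra=m(ra)=(mr)a\in mR\cap Ma$ correctly triggers (R1), and reading $ma^2=0$ as $(ma)a=0$ with $ma\in (ma)R\cap Ma$ gives $\mathbf{(c)}$; in $(2)\Rightarrow(1)$, feeding $r=a$ into $mR\sigma(a)=0$ and right-multiplying an element $x=mr=m'a$ of $mR\cap Ma$ by $a$ are exactly the right moves. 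The only tacit hypothesis you use is that $R$ has unity (for $ma=(ma)1\in(ma)R$ and for the $r=1$ specialisation of $\mathbf{(a)}$), which the paper assumes throughout, so the argument is sound as stated.
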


By Lemma \ref{lemma zhou}, a module $M_R$ is reduced if and only if it is semicommutative with $ma^2=0$ implies $ma=0$ for any $m\in M$ and $a\in R$.

\begin{corollary}[{\cite[Theorem 2.19]{moussavi/2012}}]Every $(\sigma,\delta)$-compatible and reduced module is $(\sigma,\delta)$-skew Armendariz.
\end{corollary}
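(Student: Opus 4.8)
The plan is to read off the conclusion almost directly from Proposition \ref{prop/combine}(b), which has already carried out the essential work. Suppose $m(x)g(x)=0$, where $m(x)=\sum_{i=0}^p m_i x^i\in M[x;\sigma,\delta]$ and $g(x)=\sum_{j=0}^q b_j x^j\in R[x;\sigma,\delta]$. Since $M_R$ is reduced and $(\sigma,\delta)$-compatible, hypothesis (b) of Proposition \ref{prop/combine} is met, and it yields at once the coefficientwise relations $m_i b_j=0$ for all $i,j$. So the only thing left to do is to promote each such relation to the vanishing of the full skew monomial product $m_i x^i b_j x^j$, since that is exactly what the definition of $(\sigma,\delta)$-skew Armendariz demands.

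To do this I would first expand the monomial product by the module action: $m_i x^i b_j x^j = m_i\sum_{\ell=0}^i f_\ell^i(b_j)x^{\ell+j}$. Reading this as a polynomial in $M[x;\sigma,\delta]$, we see that $m_i x^i b_j x^j=0$ is equivalent to the family of scalar identities $m_i f_\ell^i(b_j)=0$ for every $\ell\in\{0,1,\dots,i\}$. Now I would invoke $(\sigma,\delta)$-compatibility in the form recorded immediately after Lemma \ref{rem3}: from $m_i b_j=0$ we obtain $m_i f_\ell^i(b_j)=0$ for all admissible indices $\ell\le i$. Hence every coefficient of the polynomial $m_i x^i b_j x^j$ is zero, so $m_i x^i b_j x^j=0$ for all $i,j$, which is precisely the defining condition, and therefore $M_R$ is $(\sigma,\delta)$-skew Armendariz.

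There is essentially no genuine obstacle at the level of the corollary itself: the delicate inductive annihilator computation sits entirely inside Proposition \ref{prop/combine}(b), and the reduction established there (together with the equivalence, noted after Lemma \ref{lemma zhou}, that reduced means semicommutative plus $ma^2=0\Rightarrow ma=0$) is what makes the argument clean. The only point that requires a little care here is the bookkeeping that converts the coefficientwise identity $m_i b_j=0$ into the monomial identity $m_i x^i b_j x^j=0$, and the compatibility remark with the maps $f_\ell^i$ is tailored exactly for that step.
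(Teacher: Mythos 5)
Your proposal is correct and follows the same route as the paper: the paper's proof is simply ``Clearly from Proposition \ref{prop/combine}(b),'' and you invoke exactly that proposition and then fill in the bookkeeping it leaves implicit, namely that $(\sigma,\delta)$-compatibility upgrades $m_ib_j=0$ to $m_if_\ell^i(b_j)=0$ for all $\ell\le i$, hence to $m_ix^ib_jx^j=0$.
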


\begin{proof}Clearly from Proposition \ref{prop/combine}(b).
\end{proof}

Let $M_R$ be a module and $(\sigma,\delta)$ a quasi derivation of $R$. We say that $M_R$ satisfies the condition $(*)$, if for any $m(x)\in M[x;\sigma,\delta]$ and $f(x)\in R[x;\sigma,\delta]$, $m(x)f(x)=0$ implies $m(x)Rf(x)=0$. A module $M_R$ which satisfies the condition $(*)$ is semicommutative. But the converse is not true, by the next example.

\begin{example}\label{ex2}Take the ring $R=\Z_2\oplus \Z_2$ with $(\sigma,\delta)$ as considered in Example \ref{exp2}. Since $R$ is commutative then the module $R_R$ is semicommutative. However, it does not satisfy the condition $(*)$. For $p(x)=(1,0)x$ and $q(x)=(1,1)+(1,0)x\in R[x;\sigma,\delta]$. We have $p(x)q(x)=0$, but $p(x)(1,0)q(x)=(1,0)+(1,0)x\neq 0$. Thus $p(x)Rq(x)\neq 0$.
\end{example}

\begin{theorem}\label{th2}If a module $M_R$ is $(\sigma,\delta)$-compatible and reduced, then it satisfies the condition $(*)$.
\end{theorem}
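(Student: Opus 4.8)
The plan is to push everything down to the coefficient level and then apply $(\sigma,\delta)$-compatibility and semicommutativity in two separate, decoupled roles. Fix $m(x)=\sum_{i=0}^p m_i x^i\in M[x;\sigma,\delta]$ and $f(x)=\sum_{j=0}^q a_j x^j\in R[x;\sigma,\delta]$ with $m(x)f(x)=0$, and fix an arbitrary $r\in R$; the goal is $m(x)rf(x)=0$. Since $M_R$ is reduced and $(\sigma,\delta)$-compatible, Proposition \ref{prop/combine}(b) gives at once $m_i a_j=0$ for all $i,j$, which is the starting point.

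Next I would record two auxiliary facts. First, because $M_R$ is $(\sigma,\delta)$-compatible, a short induction on the length of a word $w$ in $\sigma,\delta$ shows that $m_i a_j=0$ forces $m_i\,w(a_j)=0$ for every such word: the base case is immediate, and the inductive step passes from $w'(a_j)$ to $\sigma(w'(a_j))$ or $\delta(w'(a_j))$ using $\sigma$-compatibility ($mb=0\Rightarrow m\sigma(b)=0$) and $\delta$-compatibility ($mb=0\Rightarrow m\delta(b)=0$). Second, because $\sigma$ is an endomorphism and $\delta$ is a $\sigma$-derivation, another induction on word length shows that $w(ra_j)=\sum_\alpha p_\alpha(r)\,q_\alpha(a_j)$, where each $p_\alpha,q_\alpha$ is again a word in $\sigma,\delta$; this is just the iterated use of $\sigma(ra)=\sigma(r)\sigma(a)$ and $\delta(ra)=\sigma(r)\delta(a)+\delta(r)a$. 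Since each $f_k^i$ is by definition a sum of such words, $f_k^i(ra_j)$ is a finite sum of products $p_\alpha(r)q_\alpha(a_j)$.

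With these in hand I would compute the product directly. Writing $rf(x)=\sum_j (ra_j)x^j$ and using $x^i b=\sum_{k=0}^i f_k^i(b)x^k$, one obtains
$$m(x)rf(x)=\sum_{i,j}\sum_{k=0}^i m_i\, f_k^i(ra_j)\, x^{k+j},$$
so it suffices to prove $m_i f_k^i(ra_j)=0$ for all $i,j,k$. Expanding $f_k^i(ra_j)=\sum_\alpha p_\alpha(r)q_\alpha(a_j)$ reduces this to $m_i\,p_\alpha(r)\,q_\alpha(a_j)=0$ for each term. By the first auxiliary fact $m_i q_\alpha(a_j)=0$; and since a reduced module is semicommutative, $m_i q_\alpha(a_j)=0$ yields $m_i R\,q_\alpha(a_j)=0$, so in particular $m_i p_\alpha(r)q_\alpha(a_j)=0$. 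Summing over $\alpha$ gives $m_i f_k^i(ra_j)=0$, hence $m(x)rf(x)=0$; as $r$ was arbitrary, $m(x)Rf(x)=0$ and condition $(*)$ holds.

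I expect the main obstacle to be the bookkeeping in the two inductions, and in particular making precise the factorization $w(ra)=\sum_\alpha p_\alpha(r)q_\alpha(a)$ into words $p_\alpha,q_\alpha$ in $\sigma,\delta$. This is the only place where the interaction between $\sigma$ and $\delta$ genuinely enters, and it is exactly what lets me separate the \emph{$r$-part} (absorbed by semicommutativity) from the \emph{$a_j$-part} (killed by compatibility).
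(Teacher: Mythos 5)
Your proof is correct, and it rests on the same three pillars as the paper's own argument: Proposition~\ref{prop/combine}(b) to get $m_ia_j=0$ for all $i,j$, semicommutativity of the reduced module $M_R$ (via Lemma~\ref{lemma zhou}), and $(\sigma,\delta)$-compatibility. The difference is the order in which you deploy the last two, and that order costs you the Leibniz-expansion machinery. The paper applies semicommutativity \emph{first}: from $m_ia_j=0$ it gets $m_i(ra_j)=0$ for every $r\in R$, so that each product $ra_j$ is itself a single element of $R$ annihilated by $m_i$; compatibility, in the form $mb=0\Rightarrow mf_k^{\ell}(b)=0$ already recorded after Lemma~\ref{rem3}, then yields $m_if_k^{\ell}(ra_j)=0$ at once, with no need to know anything about how $f_k^{\ell}$ interacts with ring multiplication. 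You instead apply compatibility first, and only to $a_j$, which forces you to factor $f_k^i(ra_j)=\sum_\alpha p_\alpha(r)q_\alpha(a_j)$ into words via the twisted Leibniz rule $\delta(ra)=\sigma(r)\delta(a)+\delta(r)a$, and then to absorb each $p_\alpha(r)$ by semicommutativity; your two inductions (compatibility kills $m_iw(a_j)$ for every word $w$, and every word splits on products into word-times-word terms) are both valid, including the boundary case where $q_\alpha$ is the empty word, since that term is just $m_ia_j=0$. So the argument goes through. What the paper's ordering buys is exactly the ability to skip your factorization lemma: treating $ra_j$ as one annihilated element makes the internal structure of $f_k^{\ell}$ irrelevant, giving a three-line proof. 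What your version buys is a more explicit, self-contained picture of how $f_k^i$ acts on products, which is occasionally useful elsewhere but is unnecessary overhead here.
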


\begin{proof}Let $m(x)=\sum_{i=0}^pm_ix^i\in M[x;\sigma,\delta]$ and $f(x)=\sum_{j=0}^qa_jx^j\in R[x;\sigma,\delta]$, such that $m(x)f(x)=0$. By Proposition \ref{prop/combine}(b) and semicommutativity of $M_R$, we have $m_iRa_j=0$ for all $i$ and $j$. Moreover, compatibility implies $m_if_k^{\ell}(Ra_j)=0$ for all $i,j,k,\ell$. Therefore $m(x)Rf(x)=0$.
\end{proof}

Since the ring $R=\Z_2\oplus \Z_2$ is reduced, then from Example \ref{ex2}, we can see that the condition ``$(\sigma,\delta)$-compatible" in Theorem \ref{th2} is not superfluous.

\begin{proposition}\label{prop4}Let $M_R$ be an $(\sigma,\delta)$-compatible module which satisfies $(*)$. Suppose that for any $m(x)=\sum_{i=0}^pm_ix^i\in M[x;\sigma,\delta]$ and $f(x)=\sum_{j=0}^qa_jx^j\in R[x;\sigma,\delta]\setminus\{0\}$, $m(x)f(x)=0$. Then $m_ia_q^{p+1}=0$ for all $i=0,1,\cdots, p$.
\end{proposition}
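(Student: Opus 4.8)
The plan is to sidestep any attempt to solve the coefficient equations of $m(x)f(x)=0$ directly (without reducedness one cannot hope to reach $m_ia_j=0$, which is exactly what Example \ref{exp2} reflects), and instead to use condition $(*)$ to produce the single wholesale identity $m(x)a_q^{p+1}=0$. The entire conclusion $m_ia_q^{p+1}=0$ will then fall out of this one identity together with $(\sigma,\delta)$-compatibility. The mechanism is that right multiplication of $m(x)$ by the top coefficient $a_q$ of $f(x)$ both preserves annihilation of $f(x)$ and strictly lowers the degree, so that after $p+1$ such multiplications nothing is left.

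Concretely, I would set $m^{(k)}(x):=m(x)a_q^{k}\in M[x;\sigma,\delta]$ and prove by induction on $k$ that $m^{(k)}(x)f(x)=0$ and $\deg m^{(k)}(x)\le p-k$. The annihilation passes from $k$ to $k+1$ immediately by $(*)$: from $m^{(k)}(x)f(x)=0$ we get $m^{(k)}(x)Rf(x)=0$, and specializing the middle factor to $a_q\in R$ yields $m^{(k+1)}(x)f(x)=0$. For the degree, let $c$ denote the coefficient of $x^{p-k}$ in $m^{(k)}(x)$. Because $\deg m^{(k)}(x)\le p-k$ and $\deg f=q$, the only term reaching degree $(p-k)+q$ in $m^{(k)}(x)f(x)$ is $c\,\sigma^{p-k}(a_q)$, so this must vanish; but $c\,\sigma^{p-k}(a_q)$ is precisely the coefficient of $x^{p-k}$ in $m^{(k+1)}(x)=m^{(k)}(x)a_q$, whence $\deg m^{(k+1)}(x)\le p-(k+1)$.

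At $k=p+1$ this gives $\deg m^{(p+1)}(x)\le -1$, i.e. $m(x)a_q^{p+1}=0$. Writing $A=a_q^{p+1}$, the coefficient of $x^{t}$ in $m(x)A=\sum_{i}m_ix^iA$ equals $\sum_{i=t}^{p}m_if_t^i(A)$, and this is $0$ for every $t$. I would finish by a downward induction on $t$ from $p$ to $0$: the equation for $t=p$ reads $m_p\sigma^p(A)=0$, so $m_pA=0$ by $\sigma$-compatibility; and once $m_iA=0$ is known for all $i>t$, Lemma \ref{rem3} kills each term $m_if_t^i(A)$ with $i>t$, leaving $m_t\sigma^t(A)=0$ and hence $m_tA=0$ again by $\sigma$-compatibility. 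This is the desired $m_ia_q^{p+1}=0$ for all $i$.

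I expect the crux to be the degree-drop step, where the two hypotheses play cleanly separated roles: condition $(*)$ is exactly what lets me insert the extra factor $a_q$ without destroying the relation $(-)\,f(x)=0$, while the degree bound $\deg m^{(k)}(x)\le p-k$ is what guarantees that the top coefficient of $m^{(k)}(x)f(x)$ comes solely from the leading-times-leading product $c\,\sigma^{p-k}(a_q)$ and therefore coincides with the top coefficient of $m^{(k+1)}(x)$. Compatibility, via $ma=0\Rightarrow mf_t^i(a)=0$ (Lemma \ref{rem3}), is then needed only in the final extraction, both to discard the higher terms $m_if_t^i(A)$ and to pass from $m_t\sigma^t(A)=0$ back to $m_tA=0$. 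Everything else is routine polynomial bookkeeping.
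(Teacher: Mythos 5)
Your proof is correct, and it runs on the same engine as the paper's: condition $(*)$ inserts a factor $a_q$ while preserving annihilation of $f(x)$, the top coefficient of the resulting product yields a $\sigma$-twisted relation that $(\sigma,\delta)$-compatibility untwists, and iterating this $p+1$ times exhausts the degree. The decomposition, however, is genuinely different. The paper accumulates the powers of $a_q$ on $f$: at stage $k$ it pairs the truncation $\sum_{i=0}^{p-k}m_ix^i$ of the original polynomial against $a_q^kf(x)$, which obliges it to check at every stage (again via compatibility) that the discarded top term annihilates $a_q^kf(x)$; in return it reads off the per-coefficient facts $m_ia_q^{p+1-i}=0$ directly, and the proposition follows at once. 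You accumulate the powers on $m$: your invariant is the single identity $m^{(k)}(x)f(x)=0$ with $\deg m^{(k)}(x)\le p-k$, where $m^{(k)}(x)=m(x)a_q^k$, maintained by nothing more than the observation that the leading coefficient of $m^{(k)}(x)f(x)$ coincides with the degree-$(p-k)$ coefficient of $m^{(k+1)}(x)$; the price is the final downward-induction extraction converting $m(x)a_q^{p+1}=0$ into $m_ia_q^{p+1}=0$. Your route buys two things: the induction carries one clean polynomial statement rather than a growing family of coefficient relations, and the intermediate identity $m(x)a_q^{p+1}=0$ is exactly what the corollary following Proposition \ref{prop4} needs (the paper re-derives it there from $m_ia_q^{p+1}=0$ by compatibility). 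One small referencing slip: the implication $ma=0\Rightarrow mf_t^i(a)=0$ is not literally Lemma \ref{rem3}, whose items cover only the compositions $\sigma^i\delta^j$ and $\delta^i\sigma^j$; it is the statement in the paragraph following that lemma, obtained from compatibility by induction on the length of each word in $\sigma,\delta$ occurring in $f_t^i$ --- this is also what the paper's own step ``$m_pa_q=0$ implies $m_px^pa_q=0$'' silently uses, so nothing is missing from your argument.
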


\begin{proof}Let $m(x)=\sum_{i=0}^pm_ix^i\in M[x;\sigma,\delta]$ and $f(x)=\sum_{j=0}^qa_jx^j\in R[x;\sigma,\delta]\setminus\{0\}$, such that $m(x)f(x)=0$. We can suppose that $a_q\neq 0$. From $m(x)f(x)=0$, we get $m_p\sigma^p(a_q)=0$. Since $M_R$ is $(\sigma,\delta)$-compatible, we have $m_pa_q=0$ which implies $m_px^pa_q=0$. Since $m(x)f(x)=0$ implies $m(x)a_qf(x)=0$. Then $$0=(m_px^p+m_{p-1}x^{p-1}+\cdots+m_1x+m_0)(a_q^2x^q+a_qa_{q-1}x^{q-1}+\cdots+a_qa_1x+a_qa_0)$$
$$\;\;\;=(m_{p-1}x^{p-1}+\cdots+m_1x+m_0)(a_q^2x^q+a_qa_{q-1}x^{q-1}+\cdots+a_qa_1x+a_qa_0).\qquad\;$$
If we put $f'(x)=a_qf(x)$ and $m'(x)=\sum_{i=0}^{p-1}m_ix^i$ then we get $m_{p-1}a_q^2=0$. Continuing this procedure yields $m_ia_q^{p+1-i}=0$ for all $i=0,1,\cdots, p$. Consequently $m_ia_q^{p+1}=0$ for all $i=0,1,\cdots, p$.
\end{proof}

\begin{corollary}Let $M_R$ be an $(\sigma,\delta)$-compatible module over a reduced ring $R$. If $M_R$ satisfies $(*)$, then it is $(\sigma,\delta)$-skew McCoy.
\end{corollary}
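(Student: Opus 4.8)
The plan is to manufacture an explicit nonzero element of $R$ that annihilates $m(x)$ on the right, built directly from the leading coefficient of the skew polynomial $g(x)$. Suppose $m(x)g(x)=0$ with $m(x)=\sum_{i=0}^pm_ix^i\in M[x;\sigma,\delta]$ and $g(x)=\sum_{j=0}^qb_jx^j\in R[x;\sigma,\delta]\setminus\{0\}$. After deleting any vanishing top coefficients I may assume the leading coefficient $b_q$ is nonzero. The hypotheses of the present corollary are precisely those of Proposition \ref{prop4}, namely that $M_R$ is $(\sigma,\delta)$-compatible and satisfies $(*)$; so that proposition applies verbatim and yields $m_ib_q^{p+1}=0$ for all $i=0,1,\cdots,p$.

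My candidate witness is $a:=b_q^{p+1}$. First I would verify $a\neq 0$: since $R$ is reduced it contains no nonzero nilpotent element, so $b_q\neq 0$ forces $b_q^{p+1}\neq 0$. Next I would check that $m(x)a=0$, which by the definition of $(\sigma,\delta)$-skew McCoyness means $\sum_{i=\ell}^pm_if_{\ell}^i(a)=0$ for every $\ell=0,1,\cdots,p$. Starting from $m_ia=0$ and using $(\sigma,\delta)$-compatibility (cf. Lemma \ref{rem3} together with the observation immediately following it, giving $m_ia=0\Rightarrow m_if_{\ell}^i(a)=0$ for all $\ell\leq i$), each individual summand $m_if_{\ell}^i(a)$ vanishes, hence the whole sum is zero for every $\ell$. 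Thus $m(x)a=0$ with $0\neq a\in R$, which is exactly what the definition of $(\sigma,\delta)$-skew McCoy demands.

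The only step that is not purely formal is the passage $b_q\neq 0\Rightarrow b_q^{p+1}\neq 0$, and this is where reducedness of $R$ is indispensable: without it the power $b_q^{p+1}$ could collapse to $0$ and the construction would deliver only the trivial annihilator, leaving the McCoy conclusion unproved. Everything else is supplied mechanically—the vanishing $m_ib_q^{p+1}=0$ by Proposition \ref{prop4}, and the promotion $m_ia=0\Rightarrow m_if_{\ell}^i(a)=0$ by $(\sigma,\delta)$-compatibility—so once the non-nilpotence of $b_q$ is secured the argument closes at once.
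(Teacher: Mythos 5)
Your proposal is correct and follows essentially the same route as the paper's own proof: both invoke Proposition \ref{prop4} to get $m_ib_q^{p+1}=0$, take $a=b_q^{p+1}$ as the witness (nonzero since $R$ is reduced), and use $(\sigma,\delta)$-compatibility to promote $m_ia=0$ to $m(x)a=0$. No gaps; the argument matches the paper step for step.
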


\begin{proof}Let $m(x)=\sum_{i=0}^pm_ix^i\in M[x;\sigma,\delta]$ and $f(x)=\sum_{j=0}^qa_jx^j\in R[x;\sigma,\delta]\setminus\{0\}$, such that $m(x)f(x)=0$. We can suppose that $a_q\neq 0$. By Proposition \ref{prop4}, we have $m_ia_q^{p+1}=0$ for all $i=0,1,\cdots, p$. Since $M_R$ is $(\sigma,\delta)$-compatible, we get $m_ix^ia_q^{p+1}=m_i\sum_{\ell=0}^if_{\ell}^i(a_q^{p+1})x^{\ell}=0$ for all $i$. Hence $m(x)a_q^{p+1}=0$ where $a_q^{p+1}\neq 0$, because $R$ is reduced. Consequently $M_R$ is $(\sigma,\delta)$-skew McCoy.
\end{proof}

\begin{example}\label{ex2.2}Consider a ring of polynomials over $\Z_2$, $R=\Z_2[x]$. Let $\sigma\colon R\rightarrow R$ be an endomorphism
defined by $\sigma(f(x))=f(0)$. Then
\par$\mathbf{(1)}$ $R$ is not $\sigma$-compatible. Let $f=\overline{1}+x$, $g=x\in R$, we have $fg=(\overline{1}+x)x\neq 0$, however $f\sigma(g)=(\overline{1}+x)\sigma(x)=0$.
\par$\mathbf{(2)}$ $R$ is $\sigma$-skew Armendariz \cite[Example~5]{hong/2003}.
\end{example}

From Example \ref{ex2.2}, we see that the ring $R=\Z_2[x]$ is $\sigma$-skew McCoy because it is $\sigma$-skew Armendariz, but it is not $\sigma$-compatible. Thus the $(\sigma,\delta)$-compatibility condition is not essential to obtain $(\sigma,\delta)$-skew McCoyness.

\begin{example}[{\cite[Example 2.5]{louzari2}}]\label{ex5}Let $R$ be a ring, $\sigma$ an endomorphism of $R$ and $\delta$ be a $\sigma$-derivation of $R$. Suppose that $R$
is $\sigma$-rigid.
Consider the ring $$V_3(R)=\set{\left(%
\begin{array}{ccc}
  a & b&c\\
  0 & a& b \\
  0 & 0 & a\\
  \end{array}%
\right)\mid a,b,c\in R}.$$ The ring $V_3(R)$ is $(\overline{\sigma},\overline{\delta})$-skew McCoy, reduced and $(\overline{\sigma},\overline{\delta})$-compatible, and by Theorem \ref{th2}, it satisfies the condition $(*)$.
\end{example}

\section{$(\sigma,\delta)$-skew McCoyness of some matrix extensions}

For a nonnegative integer $n\geq 2$, let $R$ be a ring and $M$ a right $R$-module. Consider
$$S_n(R):=\set{\left(%
\begin{array}{ccccc}
  a & a_{12} & a_{13} & \ldots & a_{1n} \\
  0 & a & a_{23} & \ldots & a_{2n} \\
  0 & 0 & a & \ldots & a_{3n}\\
  \vdots & \vdots &\vdots&\ddots &\vdots \\
  0 & 0 & 0 & \ldots & a \\
\end{array}%
\right)\mid a,a_{ij}\in R}$$

and

$$S_n(M):=\set{\left(%
\begin{array}{ccccc}
  m & m_{12} & m_{13} & \ldots & m_{1n} \\
  0 & m & m_{23} & \ldots & m_{2n} \\
  0 & 0 & m & \ldots & m_{3n}\\
  \vdots & \vdots &\vdots&\ddots &\vdots \\
  0 & 0 & 0 & \ldots & m \\
\end{array}%
\right)\mid m,m_{ij}\in M}$$

Clearly, $S_n(M)$ is a right $S_n(R)$-module under the usual matrix addition operation and the following scalar product operation. For $U=(u_{ij})\in S_n(M)$ and $A=(a_{ij})\in S_n(R)$, $UA=(m_{ij})\in S_n(M)$ with $m_{ij}=\sum_{k=1}^nu_{ik}a_{kj}$ for all $i,j$. A quasi derivation $(\sigma,\delta)$ of $R$ can be extended to a quasi derivation $(\overline{\sigma},\overline{\delta})$ of $S_n(R)$ as follows: $\overline{\sigma}((a_{ij}))=(\sigma(a_{ij}))$ and $\overline{\delta}((a_{ij}))=(\delta(a_{ij}))$. We can easily verify that $\overline{\delta}$ is a $\overline{\sigma}$-derivation of $S_n(R)$.

\begin{theorem} A module $M_R$ is $(\sigma,\delta)$-skew McCoy if and only if $S_n(M)$ is $(\overline{\sigma},\overline{\delta})$-skew McCoy as an $S_n(R)$-module for any nonnegative integer $n\geq 2$.
\end{theorem}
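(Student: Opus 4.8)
The plan is to reduce the whole statement to a single structural identification and then to exploit corner matrices. The main preliminary step is to establish a module isomorphism $S_n(M)[x;\overline{\sigma},\overline{\delta}]\cong S_n(M[x;\sigma,\delta])$ lying over the ring isomorphism $S_n(R)[x;\overline{\sigma},\overline{\delta}]\cong S_n(R[x;\sigma,\delta])$: a polynomial $\sum_k U_kx^k$ with matrix coefficients $U_k=(u^{(k)}_{ij})\in S_n(M)$ is sent to the matrix whose $(i,j)$ entry is the skew polynomial $\sum_k u^{(k)}_{ij}x^k\in M[x;\sigma,\delta]$. Since $\overline{\sigma}$ and $\overline{\delta}$ act entrywise, each word $\overline{f}_i^{\,j}$ is the entrywise application of $f_i^{\,j}$, and one verifies that the twisted action $(Ux^j)(Ax^{\ell})=U\sum_i\overline{f}_i^{\,j}(A)x^{i+\ell}$ corresponds exactly to ordinary matrix multiplication with the coefficientwise skew action on the entries. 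Checking this compatibility is the only nontrivial computation, and it is where I expect the bookkeeping to concentrate; once it is in place, a product $U(x)F(x)$ in $S_n(M)[x;\overline{\sigma},\overline{\delta}]$ becomes the matrix product $\widehat{U}\,\widehat{F}$ of a matrix $\widehat{U}$ with entries in $M[x;\sigma,\delta]$ and a matrix $\widehat{F}$ with entries in $R[x;\sigma,\delta]$.

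For the ``if'' direction, assume $S_n(M)$ is $(\overline{\sigma},\overline{\delta})$-skew McCoy and take $m(x)\in M[x;\sigma,\delta]$ and $g(x)\in R[x;\sigma,\delta]\setminus\{0\}$ with $m(x)g(x)=0$. I would embed diagonally, setting $\widehat{m}=m(x)I$ and $\widehat{g}=g(x)I$; then $\widehat{g}\neq 0$ and $\widehat{m}\,\widehat{g}=(m(x)g(x))I=0$. By hypothesis there is a nonzero $A=(a_{ij})\in S_n(R)$ with $\widehat{m}\,A=0$, which reads $m(x)a_{ij}=0$ for every entry. Choosing any $a_{ij}\neq 0$ furnishes the required scalar, so $M_R$ is $(\sigma,\delta)$-skew McCoy.

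For the ``only if'' direction, assume $M_R$ is $(\sigma,\delta)$-skew McCoy and let $\widehat{U}\,\widehat{F}=0$ with $\widehat{F}\neq 0$; write $u(x)$ for the common diagonal entry of $\widehat{U}$. The device is the corner matrix: for $a\in R$ let $aE_{1n}\in S_n(R)$ have $a$ in position $(1,n)$ and zeros elsewhere, which is legitimate precisely because $n\geq 2$ makes $(1,n)$ strictly above the diagonal. A direct computation shows that $\widehat{U}\,(aE_{1n})$ has a single possibly-nonzero entry, namely $u(x)a$ in position $(1,n)$, since the only nonzero entry of column $1$ of the upper triangular matrix $\widehat{U}$ is $u_{11}(x)=u(x)$. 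Hence it suffices to produce $0\neq a\in R$ with $u(x)a=0$; and if $u(x)=0$ we may simply take $a=1$, giving the nonzero annihilator $A=E_{1n}$.

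To produce such an $a$ when $u(x)\neq 0$, I extract a single surviving term from the vanishing matrix product. If the diagonal $f(x)$ of $\widehat{F}$ is nonzero, then the $(n,n)$ entry of $\widehat{U}\,\widehat{F}$ collapses to $u(x)f(x)$, because the last row of $\widehat{U}$ has only its diagonal entry; thus $u(x)f(x)=0$ with $f(x)\neq 0$. If instead $\widehat{F}$ is strictly upper triangular, choose a nonzero entry $f_{pq}(x)$ on the lowest nonzero superdiagonal $d=q-p\geq 1$; in the $(p,q)$ entry of $\widehat{U}\,\widehat{F}$ each term $u_{pk}(x)f_{kq}(x)$ forces $k\geq p$ from $\widehat{U}$ and $k\leq q-d=p$ from the minimality of $d$, so only $u(x)f_{pq}(x)$ survives, giving $u(x)f_{pq}(x)=0$ with $f_{pq}(x)\neq 0$. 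In either case the $(\sigma,\delta)$-skew McCoy property of $M_R$ yields $0\neq a\in R$ with $u(x)a=0$, and then $A=aE_{1n}$ is the desired nonzero scalar annihilator of $U(x)$. The principal obstacle, as noted, is the opening identification of the skew polynomial module over the matrix ring with matrices of skew polynomials; everything afterward is the corner-matrix bookkeeping sketched here.
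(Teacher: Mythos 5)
Your proof is correct and takes essentially the same approach as the paper, which simply defers to \cite[Theorem 14]{baser/2009}: the identification of $S_n(M)[x;\overline{\sigma},\overline{\delta}]$ with matrices over $M[x;\sigma,\delta]$, the diagonal embedding for one implication, and the corner matrix $aE_{1n}$ for the other are precisely the ingredients of that argument, and of the paper's own spelled-out proof of the analogous theorem for $V_n(M,\sigma)$. Your lowest-nonzero-superdiagonal analysis is just the explicit general-$n$ form of the case distinction that the paper's model proof carries out only for $n=2$.
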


\begin{proof} The proof is similar to \cite[Theorem 14]{baser/2009}.
\end{proof}

Now, for $n\geq 2$. Consider
$$V_n(R):=\set{\left(%
\begin{array}{cccccc}
  a_0 & a_1 & a_2 & a_3 & \ldots & a_{n-1} \\
  0 & a_0 & a_1 & a_2 & \ldots & a_{n-2} \\
  0 & 0 & a_0 & a_1 & \ldots & a_{n-3}\\
  \vdots & \vdots &\vdots& \vdots & \ddots &\vdots \\
  0 & 0 & 0 & 0 & \ldots & a_1 \\
  0 & 0 & 0 & 0& \ldots & a_0 \\
\end{array}%
\right)\mid a_0,a_1,a_2,\cdots,a_{n-1}\in R}$$

and

$$V_n(M):=\set{\left(%
\begin{array}{cccccc}
  m_0 & m_1 & m_2 & m_3 & \ldots & m_{n-1} \\
  0 & m_0 & m_1 & m_2 & \ldots & m_{n-2} \\
  0 & 0 & m_0 & m_1 & \ldots & m_{n-3}\\
  \vdots & \vdots &\vdots& \vdots & \ddots &\vdots \\
  0 & 0 & 0 & 0 & \ldots & m_1 \\
  0 & 0 & 0 & 0& \ldots & m_0 \\
\end{array}%
\right)\mid m_0,m_1,m_2,\cdots,m_{n-1}\in M}$$

With the same method as above, $V_n(M)$ is a right $V_n(R)$-module, and a quasi derivation $(\sigma,\delta)$ of $R$ can be extended to a quasi derivation $(\overline{\sigma},\overline{\delta})$ of $V_n(R)$. Note that $V_n(M)\cong M[x]/M[x](x^n)$ where $M[x](x^n)$ is a submodule of $M[x]$ generated by $x^n$ and $V_n(R)\cong R[x]/(x^n)$ where $(x^n)$ is an ideal of $R[x]$ generated by $x^n$.

\begin{proposition} A module $M_R$ is $(\sigma,\delta)$-skew McCoy if and only if $V_n(M)$ is $(\overline{\sigma},\overline{\delta})$-skew McCoy as an $V_n(R)$-module for any nonnegative integer $n\geq 2$.
\end{proposition}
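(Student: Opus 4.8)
The plan is to reduce the statement to the scalar case via the identification $V_n(M)\cong M[x]/M[x](x^n)$ noted above. Writing the skew-polynomial variable as $y$ to distinguish it from the nilpotent variable $x$, the heart of the argument is the natural isomorphism
$$V_n(M)[y;\overline{\sigma},\overline{\delta}]\cong V_n\big(M[y;\sigma,\delta]\big),\qquad V_n(R)[y;\overline{\sigma},\overline{\delta}]\cong V_n\big(R[y;\sigma,\delta]\big),$$
which collects, for each diagonal position $\ell$, the $y$-coefficients of a skew polynomial $\sum_i U_iy^i$ into a single skew polynomial $u_\ell=\sum_i m^{(i)}_\ell y^i\in M[y;\sigma,\delta]$, where $(m^{(i)}_0,\dots,m^{(i)}_{n-1})$ is the first row of the Toeplitz matrix $U_i$. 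Because $\overline{\sigma},\overline{\delta}$ act entrywise, the $y$-degree bookkeeping (with the skew multiplication rule) is independent of the Toeplitz convolution on the index $\ell$, so this map respects the module action; I would verify this by a routine comparison of the coefficient of $y^N$ in each first-row entry on both sides. Under this identification a relation $\Phi(y)G(y)=0$ becomes a product of upper triangular Toeplitz matrices $\mathcal U\mathcal A=0$, which holds if and only if $\sum_{s+t=k}u_s\alpha_t=0$ for $k=0,1,\dots,n-1$, where $(u_0,\dots,u_{n-1})$ and $(\alpha_0,\dots,\alpha_{n-1})$ are the first rows of $\mathcal U$ and $\mathcal A$.

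For the forward implication, assume $M_R$ is $(\sigma,\delta)$-skew McCoy and suppose $\Phi(y)G(y)=0$ with $G\neq 0$. Then $\mathcal A\neq 0$, so there is a least index $d$ with $\alpha_d\neq 0$; the equation for $k=d$ reads $u_0\alpha_d=0$. Thus $\alpha_d\in r_{R[y;\sigma,\delta]}(u_0)\setminus\{0\}$, and by Remark \ref{rem2}(3) there exists $0\neq c\in R$ with $u_0c=0$. The key point is then to produce a nonzero \emph{matrix} annihilator in $V_n(R)$, not merely the scalar $c$: I would take $C\in V_n(R)$ to be the Toeplitz matrix whose first row is $(0,\dots,0,c)$, i.e.\ the matrix supported only in the top-right corner. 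This $C$ is nonzero, and since its only nonzero entry sits in position $n-1$, the products $\sum_{s+t=k}u_sc_t$ vanish automatically for $k<n-1$ and reduce to $u_0c=0$ for $k=n-1$. Hence $\mathcal U C=0$, which translates back to $\Phi(y)C=0$ with $C\in V_n(R)\setminus\{0\}$, so $V_n(M)$ is $(\overline{\sigma},\overline{\delta})$-skew McCoy.

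For the converse, assume $V_n(M)$ is $(\overline{\sigma},\overline{\delta})$-skew McCoy and let $m(y)g(y)=0$ in $M[y;\sigma,\delta]$ with $g(y)\in R[y;\sigma,\delta]\setminus\{0\}$. Embed $M\hookrightarrow V_n(M)$ and $R\hookrightarrow V_n(R)$ via the scalar matrices $m\mapsto mI$, $a\mapsto aI$; since $\overline{\sigma},\overline{\delta}$ act entrywise these are compatible with the skew structure, so applying them coefficientwise gives $\widetilde{m}(y)\widetilde{g}(y)=0$ with $\widetilde{g}(y)\neq 0$. By skew McCoyness of $V_n(M)$ there is a nonzero $A\in V_n(R)$ with $\widetilde{m}(y)A=0$. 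Under the identification above $\widetilde{m}(y)$ corresponds to the scalar Toeplitz matrix $m(y)I$, so $\widetilde m(y)A=0$ forces $m(y)c_\ell=0$ for every entry $c_\ell$ of the first row of $A$; choosing any $c_\ell\neq 0$ (one exists because $A\neq 0$) yields $0\neq c_\ell\in R$ with $m(y)c_\ell=0$, proving $M_R$ is $(\sigma,\delta)$-skew McCoy.

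I expect the main obstacle to be the forward direction: the skew McCoyness of $M$ only supplies a scalar $c\in R$ annihilating the corner polynomial $u_0$, whereas what is needed is a nonzero element of the larger ring $V_n(R)$ annihilating the whole matrix $\mathcal U$. The corner-matrix trick resolves this by exploiting the nilpotent Toeplitz structure, which makes all lower diagonal equations collapse; verifying the underlying isomorphism of skew-polynomial and Toeplitz extensions is the only other point requiring care, and it is a routine (if notationally heavy) coefficient comparison.
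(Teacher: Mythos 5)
Your proof is correct and follows essentially the same route as the paper's (for this proposition the paper only cites Ba\c{s}er--Kwak--Lee and Cui--Chen, but its detailed proof of the companion theorem for $V_n(M,\sigma)$ uses exactly your strategy): identify $V_n(M)[y;\overline{\sigma},\overline{\delta}]$ with Toeplitz matrices over $M[y;\sigma,\delta]$, kill the polynomial matrix with a corner matrix $cV^{n-1}$ in the forward direction, and embed diagonally for the converse. Your minimal-index argument (least $d$ with $\alpha_d\neq 0$, forcing $u_0\alpha_d=0$ and then invoking McCoyness of $M_R$, which is harmless even when $u_0=0$) treats all $n$ uniformly and replaces the paper's $n=2$ case analysis, a polish of presentation rather than a different method.
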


\begin{proof}The proof is similar to that of \cite[Theorem 14]{baser/2009} or \cite[Proposition 2.27]{cui/2012}.
\end{proof}

\begin{corollary} For a nonnegative integer $n\geq 2$, we have:
\par$\mathbf{(1)}$ $M_R$ is $(\sigma,\delta)$-skew McCoy if and only if $M[x]/M[x](x^n)$ is $(\overline{\sigma},\overline{\delta})$-skew McCoy.
\par$\mathbf{(2)}$ $R$ is $(\sigma,\delta)$-skew McCoy if and only if $R[x]/(x^n)$ is $(\overline{\sigma},\overline{\delta})$-skew McCoy.
\par$\mathbf{(3)}$ $R$ is McCoy if and only if $R[x]/(x^n)$ is McCoy.
\end{corollary}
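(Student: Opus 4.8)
The plan is to read off all three statements from the Proposition immediately above, together with the two identifications $V_n(R)\cong R[x]/(x^n)$ and $V_n(M)\cong M[x]/M[x](x^n)$ already recorded before it. The only point requiring care is that these identifications are compatible with the module action and with the extended quasi-derivation, so that the $(\overline{\sigma},\overline{\delta})$-skew McCoy property genuinely transports across them.

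First I would make the two isomorphisms explicit as Toeplitz maps. Send $a_0+a_1x+\cdots+a_{n-1}x^{n-1}\in R[x]/(x^n)$ to the matrix $\phi$ in $V_n(R)$ whose $(i,j)$-entry equals $a_{j-i}$ for $i\leq j$ and $0$ otherwise, and likewise send $m_0+m_1x+\cdots+m_{n-1}x^{n-1}\in M[x]/M[x](x^n)$ to the matrix $\psi$ in $V_n(M)$ with $(i,j)$-entry $m_{j-i}$. I would then check that truncated polynomial multiplication modulo $x^n$ corresponds exactly to the Toeplitz matrix product defining the $V_n(R)$-action on $V_n(M)$, so that $\psi$ is an isomorphism of right modules lying over the ring isomorphism $\phi$. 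Finally I would verify that applying $\overline{\sigma}$ (resp. $\overline{\delta}$) entrywise to a Toeplitz matrix is the same as applying $\sigma$ (resp. $\delta$) to each coefficient, which is precisely the quasi-derivation induced by $(\sigma,\delta)$ on the quotient $R[x]/(x^n)$; hence $\phi$ and $\psi$ intertwine the two quasi-derivations.

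With this compatibility in hand, statement $\mathbf{(1)}$ is immediate: by Remark \ref{rem2}$\mathbf{(3)}$ the $(\sigma,\delta)$-skew McCoy property is formulated purely through vanishing of products in the skew polynomial module and the existence of a nonzero annihilating scalar, so it is invariant under an isomorphism of module/ring pairs that respects the quasi-derivation. Thus $V_n(M)$ is $(\overline{\sigma},\overline{\delta})$-skew McCoy over $V_n(R)$ if and only if $M[x]/M[x](x^n)$ is $(\overline{\sigma},\overline{\delta})$-skew McCoy over $R[x]/(x^n)$, and combining with the Proposition gives $\mathbf{(1)}$. For $\mathbf{(2)}$ I would specialize $\mathbf{(1)}$ to $M=R_R$: here $M[x]/M[x](x^n)=R[x]/(x^n)$ as a module over itself, and by the definition of a $(\sigma,\delta)$-skew McCoy ring this is exactly the assertion that $R[x]/(x^n)$ is $(\overline{\sigma},\overline{\delta})$-skew McCoy. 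For $\mathbf{(3)}$ I would further specialize $\mathbf{(2)}$ to $\sigma=\mathrm{id}_R$ and $\delta=0$, whence $\overline{\sigma}=\mathrm{id}$ and $\overline{\delta}=0$ on $R[x]/(x^n)$; by Remark \ref{rem2}$\mathbf{(2)}$ the $(\overline{\sigma},\overline{\delta})$-skew McCoy property then collapses to ordinary McCoyness, which yields $\mathbf{(3)}$.

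The main, and essentially only, obstacle is the bookkeeping in the second paragraph: confirming that the Toeplitz identification is simultaneously a ring isomorphism, a module isomorphism, and an isomorphism of quasi-derivations, so that the skew polynomial extensions over $V_n(R)$ and over $R[x]/(x^n)$ are identified as well. Once that is settled, all three equivalences are formal consequences of the Proposition and of Remark \ref{rem2}$\mathbf{(2)}$.
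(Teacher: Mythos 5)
Your proposal is correct and follows exactly the route the paper intends: the corollary is read off from the preceding Proposition together with the identifications $V_n(M)\cong M[x]/M[x](x^n)$ and $V_n(R)\cong R[x]/(x^n)$, which the paper records (via the maps $\varphi$ and $\phi$ with $\phi(NA)=\phi(N)\varphi(A)$) as being compatible with the module action and the extended quasi-derivation. Your extra care in verifying the Toeplitz identifications intertwine $(\overline{\sigma},\overline{\delta})$, and the specializations $M=R_R$ for $\mathbf{(2)}$ and $\sigma=\mathrm{id}_R$, $\delta=0$ for $\mathbf{(3)}$, simply make explicit what the paper leaves implicit.
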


In the next, we define {\it skew triangular matrix modules} $V_n(M,\sigma)$, based on the definition of skew triangular matrix rings $V_n(R,\sigma)$ given by Isfahani \cite{isfahani/2011}. Let $\sigma$ be an endomorphism of a ring $R$ and $M_R$ a right $R$-module. For $n\geq 2$. Consider
$$V_n(R,\sigma):=\set{\left(%
\begin{array}{cccccc}
  a_0 & a_1 & a_2 & a_3 & \ldots & a_{n-1} \\
  0 & a_0 & a_1 & a_2 & \ldots & a_{n-2} \\
  0 & 0 & a_0 & a_1 & \ldots & a_{n-3}\\
  \vdots & \vdots &\vdots& \vdots & \ddots &\vdots \\
  0 & 0 & 0 & 0 & \ldots & a_1 \\
  0 & 0 & 0 & 0& \ldots & a_0 \\
\end{array}%
\right)\mid a_0,a_2,\cdots,a_{n-1}\in R}$$

and

$$V_n(M,\sigma):=\set{\left(%
\begin{array}{cccccc}
  m_0 & m_1 & m_2 & m_3 & \ldots & m_{n-1} \\
  0 & m_0 & m_1 & m_2 & \ldots & m_{n-2} \\
  0 & 0 & m_0 & m_1 & \ldots & m_{n-3}\\
  \vdots & \vdots &\vdots& \vdots & \ddots &\vdots \\
  0 & 0 & 0 & 0 & \ldots & m_1 \\
  0 & 0 & 0 & 0& \ldots & m_0 \\
\end{array}%
\right)\mid m_0,m_2,\cdots,m_{n-1}\in M}$$

Clearly $V_n(M,\sigma)$ is a right $V_n(R,\sigma)$-module under the usual matrix addition operation and the following scalar product operation.
$$\left(%
\begin{array}{cccccc}
  m_0 & m_1 & m_2 & m_3 & \ldots & m_{n-1} \\
  0 & m_0 & m_1 & m_2 & \ldots & m_{n-2} \\
  0 & 0 & m_0 & m_1 & \ldots & m_{n-3}\\
  \vdots & \vdots &\vdots& \vdots & \ddots &\vdots \\
  0 & 0 & 0 & 0 & \ldots & m_1 \\
  0 & 0 & 0 & 0& \ldots & m_0 \\
\end{array}%
\right)
\left(%
\begin{array}{cccccc}
  a_0 & a_1 & a_2 & a_3 & \ldots & a_{n-1} \\
  0 & a_0 & a_1 & a_2 & \ldots & a_{n-2} \\
  0 & 0 & a_0 & a_1 & \ldots & a_{n-3}\\
  \vdots & \vdots &\vdots& \vdots & \ddots &\vdots \\
  0 & 0 & 0 & 0 & \ldots & a_1 \\
  0 & 0 & 0 & 0& \ldots & a_0 \\
\end{array}%
\right)=$$
$$\left(%
\begin{array}{cccccc}
  c_0 & c_1 & c_2 & c_3 & \ldots & c_{n-1} \\
  0 & c_0 & c_1 & c_2 & \ldots & c_{n-2} \\
  0 & 0 & c_0 & c_1 & \ldots & c_{n-3}\\
  \vdots & \vdots &\vdots& \vdots & \ddots &\vdots \\
  0 & 0 & 0 & 0 & \ldots & c_1 \\
  0 & 0 & 0 & 0& \ldots & c_0 \\
\end{array}%
\right),\; \mathrm{where} $$
$c_i=m_0\sigma^{0}(a_i)+m_1\sigma^1(a_{i-1})+m_2\sigma^2(a_{i-2})+\cdots+m_i\sigma^{i}(a_0)$ for each $0\leq i\leq n-1$.
\par We denote elements of $V_n(R,\sigma)$ by $(a_0,a_1,\cdots,a_{n-1})$ and elements of $V_n(M,\sigma)$ by $(m_0,m_1,\cdots,m_{n-1})$. There is a ring isomorphism $\varphi\colon R[x;\sigma]/(x^n)\rightarrow V_n(R,\sigma)$ given by $\varphi(a_0+a_1x+a_2x^2+\cdots+a_{n-1}x^{n-1}+(x^n))=(a_0,a_1,a_2,\cdots,a_{n-1})$, and an abelian group isomorphism $\phi\colon M[x,\sigma]/M[x,\sigma](x^n)\rightarrow V_n(M,\sigma)$ given by $\phi(m_0+m_1x+m_2x^2+\cdots+m_{n-1}x^{n-1}+(x^n))=(m_0,m_1,m_2,\cdots,m_{n-1})$ such that $$\phi(N(x)A(x))=\phi(N(x))\varphi(A(x))$$ for any $N(x)=m_0+m_1x+m_2x^2+\cdots+m_{n-1}x^{n-1}+(x^n)\in M[x,\sigma]/M[x,\sigma](x^n)$ and $A(x)=a_0+a_1x+a_2x^2+\cdots+a_{n-1}x^{n-1}+(x^n)\in R[x;\sigma]/(x^n)$. The endomorphism $\sigma$ of $R$ can be extended to $V_n(R,\sigma)$ and $R[x;\sigma]$, and we will denote it in both cases by $\overline{\sigma}$.

\begin{theorem} A module $M_R$ is $\sigma$-skew McCoy if and only if $V_n(M,\sigma)$ is $\overline{\sigma}$-skew McCoy as an $V_n(R,\sigma)$-module for any nonnegative integer $n\geq 2$.
\end{theorem}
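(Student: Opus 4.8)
The plan is to transport the statement across the two isomorphisms introduced just above the theorem: the ring isomorphism $\varphi\colon R[x;\sigma]/(x^n)\to V_n(R,\sigma)$ and the group isomorphism $\phi\colon M[x;\sigma]/M[x;\sigma](x^n)\to V_n(M,\sigma)$ satisfying $\phi(N(x)A(x))=\phi(N(x))\varphi(A(x))$. Throughout I work in the skew polynomial extensions $V_n(M,\sigma)[y;\overline{\sigma}]$ over $V_n(R,\sigma)[y;\overline{\sigma}]$, reserving a fresh variable $y$ for the outer polynomial so that $x$ keeps its role as the internal (truncated) coordinate of the matrices. Two auxiliary $R$-homomorphisms drive the argument: the scalar embedding $\iota\colon a\mapsto(a,0,\dots,0)$, which is a ring map on $R$ and a module map on $M$ and commutes with $\sigma$ and $\overline{\sigma}$, and the diagonal projection $\pi\colon(a_0,\dots,a_{n-1})\mapsto a_0$, likewise compatible with the structure maps. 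I also use the distinguished nonzero element $\varepsilon=(0,\dots,0,1)=\varphi(x^{n-1}+(x^n))$ and write $a\varepsilon=(0,\dots,0,a)$.

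For the implication that $M_R$ being $\sigma$-skew McCoy forces $V_n(M,\sigma)$ to be $\overline{\sigma}$-skew McCoy, suppose $M(y)G(y)=0$ with $M(y)=\sum_iU_iy^i$ and $G(y)=\sum_jA_jy^j\ne 0$, and write $U_i=(u_{i,0},\dots,u_{i,n-1})$ and $A_j=(a_{j,0},\dots,a_{j,n-1})$. The first step is to read off, from the scalar-product formula $c_s=\sum_{k=0}^s m_k\sigma^k(a_{s-k})$, the $x$-level-$s$ part of $M(y)G(y)$, which in degree $y^d$ equals $\sum_{i+j=d}\sum_{k+l=s}u_{i,k}\sigma^{k+i}(a_{j,l})$. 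Let $t$ be the least index for which some $a_{j,t}\ne 0$; such $t$ exists because $G\ne0$. Reading the equations at level $s=t$, every term with $l<t$ vanishes, leaving $\sum_{i+j=d}u_{i,0}\sigma^i(a_{j,t})=0$ for all $d$; this says precisely that $m(y)g_t(y)=0$ in $M[y;\sigma]$, where $m(y)=\sum_iu_{i,0}y^i=\pi_*M(y)$ and $g_t(y)=\sum_ja_{j,t}y^j\ne0$. Now I invoke that $M_R$ is $\sigma$-skew McCoy to obtain $0\ne a\in R$ with $m(y)a=0$, i.e. $u_{i,0}\sigma^i(a)=0$ for all $i$. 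The last step is the direct check that $A:=a\varepsilon=(0,\dots,0,a)\ne0$ annihilates $M(y)$: since $\overline{\sigma}^i(a\varepsilon)=(0,\dots,0,\sigma^i(a))$, the product rule gives $U_i\overline{\sigma}^i(a\varepsilon)=(0,\dots,0,u_{i,0}\sigma^i(a))$, whence $M(y)A=0$, as required. The one point needing care, and what I regard as the crux, is the case where the diagonal $\pi_*G(y)$ vanishes while $G\ne0$ (so $t\ge1$): projecting to the diagonal then yields no information, and it is the passage to the \emph{lowest} nonvanishing $x$-level that rescues the argument while still feeding a nonzero polynomial $g_t$ into the $\sigma$-skew McCoy hypothesis.

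For the converse, assume $V_n(M,\sigma)$ is $\overline{\sigma}$-skew McCoy and take $m(y)g(y)=0$ in $M[y;\sigma]$ with $m(y)=\sum_im_iy^i$ and $g(y)=\sum_ja_jy^j\ne0$. Applying the scalar embedding coefficientwise gives $M(y)=\sum_i\iota(m_i)y^i$ and $G(y)=\sum_j\iota(a_j)y^j$ with $M(y)G(y)=\iota_*(m(y)g(y))=0$ and $G\ne0$. By hypothesis there is a nonzero $A=(a_0,\dots,a_{n-1})\in V_n(R,\sigma)$ with $M(y)A=0$; expanding $\iota(m_i)\overline{\sigma}^i(A)=(m_i\sigma^i(a_0),\dots,m_i\sigma^i(a_{n-1}))$ shows $m_i\sigma^i(a_\ell)=0$ for all $i$ and all $\ell$. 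Choosing any $\ell$ with $a_\ell\ne0$, possible since $A\ne0$, yields $m(y)a_\ell=0$ with $a_\ell\in R\setminus\{0\}$, proving $M_R$ is $\sigma$-skew McCoy. I expect this direction to be routine, and the uniformity in $n\ge2$ is automatic; essentially all of the difficulty is concentrated in the diagonal-vanishing case of the forward implication described above.
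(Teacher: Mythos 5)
Your proof is correct, and its skeleton coincides with the paper's: in the forward direction you end up annihilating $M(y)$ with the corner matrix $(0,\dots,0,a)$ (the paper's $\left(\begin{smallmatrix}0&c\\0&0\end{smallmatrix}\right)$ for $n=2$), and your converse is exactly the paper's diagonal embedding $m(x)\mapsto m(x)I_n$ followed by extracting any nonzero entry of the annihilating matrix. The genuine difference is in how much is actually proved: the paper writes out only the case $n=2$, splitting into $\alpha_{11}\neq 0$ versus $\alpha_{11}=0$ and, within the first case, into $\beta_{11}\neq 0$ versus $\beta_{11}=0$, and then asserts that "other cases can be proved with the same manner." Your device of passing to the lowest $x$-level $t$ at which $G$ has a nonzero coefficient replaces that case analysis by a single uniform computation: the level-$t$ equations collapse to $\sum_{i+j=d}u_{i,0}\sigma^i(a_{j,t})=0$, i.e. $\pi_*M(y)\cdot g_t(y)=0$ with $g_t\neq 0$, which feeds the $\sigma$-skew McCoy hypothesis for every $n\geq 2$ at once; it also absorbs the degenerate case $\pi_*M(y)=0$ without separate treatment, since the McCoy conclusion is then vacuous (any nonzero $a$ works). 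So what your write-up buys is a complete proof of the statement as quantified over all $n$, whereas the paper's explicit argument covers only $n=2$ and leaves the induction-like generalization (which is precisely your minimal-level bookkeeping) to the reader.
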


\begin{proof}We shall adapt the proof of \cite[Theorem 14]{baser/2009} to this situation. Note that $V_n(R,\sigma)[x,\overline{\sigma}]\cong V_n(R[x,\sigma], \overline{\sigma})$ and $V_n(M,\sigma)[x,\overline{\sigma}]\cong V_n(M[x,\sigma], \overline{\sigma})$. We only prove when $n=2$, because other cases can be proved with the same manner. Suppose that $M_R$ is $\sigma$-skew McCoy. Let $0\neq m(x)\in V_2(M,\sigma)[x,\overline{\sigma}]$ and $0\neq f(x)\in V_2(R,\sigma)[x,\overline{\sigma}]$ such that $m(x)f(x)=0$, where
$$m(x)=\sum_{i=0}^p
\left(\begin{array}{cc}
m_{11}^{(i)} & m_{12}^{(i)} \\
0 & m_{11}^{(i)} \\
\end{array}\right)x^i=
\left(\begin{array}{cc}
\sum_{i=0}^pm_{11}^{(i)}x^i & \sum_{i=0}^pm_{12}^{(i)}x^i \\
0 & \sum_{i=0}^pm_{11}^{(i)}x^i \\
\end{array}\right)=\left(\begin{array}{cc}
\alpha_{11} & \alpha_{12} \\
0 & \alpha_{11} \\
\end{array}\right)$$

$$f(x)=\sum_{j=0}^q
\left(\begin{array}{cc}
a_{11}^{(j)} & a_{12}^{(j)} \\
0 & a_{11}^{(j)} \\
\end{array}\right)x^j=
\left(\begin{array}{cc}
\sum_{j=0}^q
a_{11}^{(j)}x^j & \sum_{j=0}^q
a_{12}^{(j)}x^j \\
0 & \sum_{j=0}^q
a_{11}^{(j)}x^j \\
\end{array}\right)=
\left(\begin{array}{cc}
\beta_{11} & \beta_{12} \\
0 & \beta_{11} \\
\end{array}\right)$$
Then
$\left(\begin{array}{cc}
\alpha_{11} & \alpha_{12} \\
0 & \alpha_{11} \\
\end{array}\right)\left(\begin{array}{cc}
\beta_{11} & \beta_{12} \\
0 & \beta_{11} \\
\end{array}\right)=0$, which gives $\alpha_{11}\beta_{11}=0$ and $\alpha_{11}\beta_{12}+\alpha_{12}\overline{\sigma}(\beta_{11})=0$ in $M[x;\sigma]$. If $\alpha_{11}\neq 0$, then there exists $0\neq \beta\in \{\beta_{11},\beta_{12}\}$ such that $\alpha_{11}\beta=0$. Since $M_R$ is $\sigma$-skew McCoy then there exists $0\neq c\in R$ which satisfies $\alpha_{11}c=0$, thus $\left(\begin{array}{cc}
\alpha_{11} & \alpha_{12} \\
0 & \alpha_{11} \\
\end{array}\right)\left(\begin{array}{cc}
0 & c \\
0 & 0 \\
\end{array}\right)=\left(\begin{array}{cc}
0 & \alpha_{11}c \\
0 & 0 \\
\end{array}\right)=0$. If $\alpha_{11}=0$ then $\left(\begin{array}{cc}
0 & \alpha_{12} \\
0 & 0 \\
\end{array}\right)\left(\begin{array}{cc}
0 & c \\
0 & 0 \\
\end{array}\right)=0$, for any $0\neq c\in R$. Therefore, $V_2(M,\sigma)$ is $\overline{\sigma}$-skew McCoy.
\par Conversely, suppose that $V_2(M,\sigma)$ is an $\overline{\sigma}$-skew McCoy module. Let $0\neq m(x)=m_0+m_1x+\cdots+m_px^p\in M[x;\sigma]$ and $0\neq f(x)=a_0+a_1x+\cdots+a_qx^q\in R[x;\sigma]$, such that $m(x)f(x)=0$. Then $\left(\begin{array}{cc}
m(x) & 0 \\
0 & m(x) \\
\end{array}\right)\left(\begin{array}{cc}
f(x) & 0 \\
0 & f(x) \\
\end{array}\right)=\left(\begin{array}{cc}
m(x)f(x) & 0 \\
0 & m(x)f(x) \\
\end{array}\right)=0$, so there exists $0\neq \left(\begin{array}{cc}
a & b \\
0 & a \\
\end{array}\right)\in V_2(R,\sigma)$ such that $\left(\begin{array}{cc}
m(x) & 0 \\
0 & m(x) \\
\end{array}\right)\left(\begin{array}{cc}
a & b \\
0 & a \\
\end{array}\right)=0$, because $V_2(M,\sigma)$ is $\overline{\sigma}$-skew McCoy. Thus $m(x)a=m(x)b=0$, where $a\neq 0$ or $b\neq 0$. Therefore, $M_R$ is $\sigma$-skew McCoy.
\end{proof}

\begin{corollary} For a nonnegative integer $n\geq 2$, we have:
\par$\mathbf{(1)}$ $M_R$ is $\sigma$-skew McCoy if and only if $M[x;\sigma]/M[x;\sigma](x^n)$ is $\overline{\sigma}$-skew McCoy.
\par$\mathbf{(2)}$ $R$ is $\sigma$-skew McCoy if and only if $R[x;\sigma]/(x^n)$ is $\overline{\sigma}$-skew McCoy.
\par$\mathbf{(3)}$ $M_R$ is McCoy if and only if $M[x]/M[x](x^n)$ is McCoy.
\par$\mathbf{(4)}$ $R$ is McCoy if and only if $R[x]/(x^n)$ is McCoy.
\end{corollary}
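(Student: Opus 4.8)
\emph{Plan.} The idea is to read all four statements off the Theorem immediately preceding, transporting the $\overline{\sigma}$-skew McCoy property through the isomorphisms $\varphi$ and $\phi$ constructed above. Recall that $\varphi\colon R[x;\sigma]/(x^n)\to V_n(R,\sigma)$ is a ring isomorphism and $\phi\colon M[x;\sigma]/M[x;\sigma](x^n)\to V_n(M,\sigma)$ is an abelian group isomorphism with $\phi(N(x)A(x))=\phi(N(x))\varphi(A(x))$; thus $\phi$ is a module isomorphism lying over the ring isomorphism $\varphi$.

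First I would verify that $\varphi$ and $\phi$ intertwine the two copies of $\overline{\sigma}$, namely $\varphi\circ\overline{\sigma}=\overline{\sigma}\circ\varphi$ and likewise for $\phi$, since on both sides $\overline{\sigma}$ is just the coefficientwise (respectively entrywise) action of $\sigma$. Consequently $\varphi$ extends to a ring isomorphism $(R[x;\sigma]/(x^n))[x;\overline{\sigma}]\cong V_n(R,\sigma)[x;\overline{\sigma}]$ and $\phi$ to a compatible module isomorphism of the corresponding skew polynomial extensions. Being $\overline{\sigma}$-skew McCoy is a statement phrased entirely in terms of this module-over-ring structure together with the endomorphism $\overline{\sigma}$, so it is preserved under such a compatible pair of isomorphisms. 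In particular, $V_n(M,\sigma)$ is $\overline{\sigma}$-skew McCoy as a $V_n(R,\sigma)$-module if and only if $M[x;\sigma]/M[x;\sigma](x^n)$ is $\overline{\sigma}$-skew McCoy as an $R[x;\sigma]/(x^n)$-module.

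With this translation, statement $(1)$ is exactly the Theorem above read through $\varphi$ and $\phi$. Statement $(2)$ is the special case $M=R_R$ of $(1)$, for then $M[x;\sigma]/M[x;\sigma](x^n)=R[x;\sigma]/(x^n)$ and $V_n(M,\sigma)=V_n(R,\sigma)$. For $(3)$ and $(4)$ I would specialize to $\sigma=id_R$ (and $\delta=0$): by Remark \ref{rem2}$(2)$ the $\sigma$-skew McCoy notion collapses to ordinary McCoyness and $M[x;\sigma]=M[x]$, so $(3)$ is $(1)$ and $(4)$ is $(2)$, each with $\sigma=id_R$. The only genuine point to check is the intertwining $\varphi\circ\overline{\sigma}=\overline{\sigma}\circ\varphi$ together with the resulting invariance of the McCoy condition; here one must confirm that $\varphi$ maps the scalar subring $R$ of $R[x;\sigma]/(x^n)$ onto the scalars of $V_n(R,\sigma)$, so that a nonzero degree-$0$ annihilating witness $a\in R\setminus\{0\}$ on one side corresponds to a nonzero degree-$0$ witness on the other, exactly as required by the definition of $\overline{\sigma}$-skew McCoy. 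This bookkeeping is routine, and everything else is a direct transcription of the Theorem.
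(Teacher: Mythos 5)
Your proof is correct and takes essentially the same route the paper intends: the corollary is stated there without proof precisely because it is the preceding Theorem transported through the isomorphisms $\varphi$ and $\phi$ (which intertwine the two copies of $\overline{\sigma}$), with $(2)$ the case $M=R_R$ and $(3)$, $(4)$ the specialization $\sigma=id_R$. One small slip in wording: the annihilating witness in the definition of $\overline{\sigma}$-skew McCoy for $M[x;\sigma]/M[x;\sigma](x^n)$ lives in the base ring $R[x;\sigma]/(x^n)\setminus\{0\}$, not in $R\setminus\{0\}$, but since $\varphi$ is a bijective ring map onto $V_n(R,\sigma)$ this does not affect your argument.
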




\begin{thebibliography}{20}
\bibitem{moussavi/2012} A. Alhevaz and A. Moussavi, {\it On skew Armendariz and skew quasi-Armendariz modules}, Bull. Iran. Math. Soc.  {\bf 1} (2012), 55-84.
\bibitem{annin} S. Annin, {\it Associated primes over skew polynomials rings}, Comm. Algebra {\bf 30} (2002), 2511-2528.
\bibitem{annin/2004} S. Annin, {\it Associated primes over Ore extension rings}, J. Algebra appl. {\bf 3} (2004), 193-205.
\bibitem{baser/2009} M. Ba\c{s}er, T. K. Kwak and Y. Lee, {\it The McCoy condition on skew polynomial rings}, Comm. Algebra {\bf 37} (2009), 4026-4037.
\bibitem{louzari2} L. Ben Yakoub and M. Louzari, {\it Ore extensions of extended symmetric and reversible rings,} Inter. J. of Algebra {\bf  3}, 2009, No.9, 423-433.
\bibitem{rege2002} A. M. Buhphang and M. B. Rege, {\it semicommutative modules and Armendariz modules}, Arab J. Math. Sciences {\bf 8} (2002), 53-65.
\bibitem{cui/2011} J. Cui and J. Chen, {\it On McCoy modules}, Bull. Korean Math. Soc. {\bf 48} (2011), No. 1, 23-33.
\bibitem{cui/2012} J. Cui and J. Chen, {\it On $\alpha$-skew McCoy modules}, Turk. J. Math. {\bf 36} (2012), 217-229.
\bibitem{hashemi/2011} E. Hashemi, {\it Extensions of Baer and quasi-Baer modules}, Bull. of the Iran. Math. Soc. {\bf 37} (2011), No. 1, 1-13.
\bibitem{hong/2003} C. Y. Hong, N. K. Kim and T. K. Kwak,{\it  On Skew Armendariz Rings,} Comm. Algebra {\bf 31}(1) (2003), 103-122.
\bibitem{mccoy} N.H. McCoy, {\it Annihilators in polynomial rings}, Amer. math. monthly {\bf 64} (1957), 28-29.
\bibitem{isfahani/2011} A. R. Nasr-Isfahani, {\it On Skew Triangular Matrix Rings}, Comm. Algebra {\bf 39} (2011), 4461-4469
\bibitem{kim/lee} N. K. Kim and Y. Lee, {\it Armendariz rings and reduced rings}, J. Algebra {\bf 223} (2000), 477-488.
\bibitem{lam} T. Y. Lam, A. Leroy and J. Matczuk, {\it Primeness, semiprimeness and the prime radical of Ore extensions}, Comm. Algebra {\bf 25} (8) (1997), 2459-2506.
\bibitem{lee/zhou} T. K. Lee and Y. Lee, {\it Reduced Modules}, Rings, modules, algebras and abelian groups, 365-377, Lecture Notes in Pure and App. Math. 236 Dekker, New york, (2004).
\bibitem{matczuk/induced} A. Leroyy and J. Matczuk, {\it On induced modules over Ore extensions}, Comm. Algebra {\bf 32}(7) (2004), 2743-2766.
\bibitem{zhang/chen} C. P. Zhang and J. L. Chen, {\it $\sigma$-skew Armendariz modules and $\sigma$-semicommutative modules}, Taiwanese J. Math. {\bf 12}(2) (2008), 473-486.
\bibitem{zhao/liu} R. Zhao and Z. Liu, {\it Extensions of McCoy Rings}, Algebra Colloq. {\bf 16} 3 (2009), 495-502.


\end{thebibliography}
\end{document}